\documentclass[12pt]{article}
\usepackage{pict2e}
\usepackage{graphicx}
\usepackage{subfigure}
\usepackage{graphicx}
\usepackage{color}
\oddsidemargin 0pt \evensidemargin 0pt \textheight 21.1 true cm
\usepackage{latexsym}
\usepackage{multirow}
\usepackage{graphicx}
\usepackage{float}
\usepackage{xcolor}
\usepackage{mathrsfs}
\textwidth 15.7 true cm
\parskip 0.1 cm

\def\q{\hfill\rule{1ex}{1ex}}
\def\0{\emptyset}

\newcommand{\liuhao}{\fontsize{7.25pt}{\baselineskip}\selectfont}

\begin{document}
	\newtheorem{claim}{Claim}[section]
	\newtheorem{thm}{Theorem}[section]
	\newtheorem{cor}{Corollary}[section]
	\newtheorem{conj}{Conjecture}[section]
	\newtheorem{ques}{Question}[section]
	\newtheorem{lem}{Lemma}[section]
	\newtheorem{prop}{Proposition}[section]
	\newenvironment{proof}{\noindent {\bf
			Proof.}}{\rule{2mm}{2mm}\par\medskip}
	\newcommand{\remark}{\medskip\par\noindent {\bf Remark.~~}}
	\newcommand{\pp}{{\it p.}}
	\newcommand{\de}{\em}

	\title{\bf On the maximal Sombor index of quasi-tree graphs}

\author{{ 
		{\small\bf Ruiting ZHANG}\thanks{Email: deliazhangruiting@163.com; partially supported by NNSFC(No. 11971158).} {\quad
			{\small\bf Huiqing LIU}\thanks{Email: hqliu@hubu.edu.cn; partially supported by NNSFC (No. 11971158).}}{\quad{\small\bf Yibo LI}\thanks{Email: yiboli\_2019@163.com; partially supported by NNSFC (No. 11971158). Corresponding author.}}}\\ {\small Hubei Key Laboratory of Applied Mathematics, Faculty of
		Mathematics and Statistics, }\\{\small Hubei University, Wuhan 430062, China}}


\date{}
\maketitle \baselineskip 17.2pt

\begin{abstract}
	The Sombor index $SO(G)$ of a graph $G$ is the sum of the edge weights $\sqrt{d^2_G(u)+d^2_G(v)}$ of all edges $uv$ of $G$, where $d_G(u)$ denotes the degree of the vertex $u$ in $G$. A connected graph $G = (V ,E)$ is called a quasi-tree, if there exists $u\in V (G)$ such that $G-u$ is a tree. Denote $\mathscr{Q}(n,k)$=\{$G$: $G$ is a quasi-tree graph of order $n$ with $G-u$ being a tree and $d_G(u)=k$\}. In this paper, we determined the maximum, the second maximum and the third maximum Sombor indices of all quasi-tree graphs in $\mathscr{Q}(n,k)$, respectively. Moreover, we characterized their corresponding extremal graphs, respectively.
	
	\vskip 0.2cm
	{\bf Keywords}: Sombor index, quasi-tree, tree, unicyclic graph
	
	\vskip 0.2cm
	{\bf MSC}: 05C07 05C35
\end{abstract}





\section{Introduction}

Let $G=(V(G),E(G))$ be a simple undirected graph. For $v\in V(G)$, we use $N_G(v)$ to denote the \textit{set of  neighbors} of $v$ in $G$, and the \textit{degree} of $v$ is $d_G(v)=|N_G(v)|$. An \textit{$i$-vertex} is
a vertex of degree $i$. Let $V_i(G)$ be the set of all $i$-vertices in $G$. For a subgraph $H$ of $G$, let $N_H(v) = N_G(v) \cap V(H)$ and $d_H(v) = |N_H(v)|$ for $v \in V (H)$. We will use $G-v$ or $G-uv$ to denote the graph that arises from $G$ by deleting the vertex $v \in V(G)$ or the edge $uv \in E(G)$. Similarly, $G+uv$ is a graph that arises from $G$ by adding an edge $uv \notin E(G)$, where $u,~v \in V(G)$.

A \textit{tree} is a connected acyclic graph, and a  \textit{unicyclic graph} is a connected graph $G$ with  $|V(G)|=|E(G)|$. If there exists a vertex $u\in V(G)$ such that $G-u$ is a tree, then $G$ is called a \textit{quasi-tree}. Let $\mathscr{Q}(n,k)$=\{$G$: $G$ is a quasi-tree graph of order $n$ with $G-u$ being a tree and $d_G(u)=k$\}. Then $k\ge 1$ and $\mathscr{Q}(n,1)$ is the set of all trees of order $n$.

A vertex-degree-based-topological index was recently introduced by Gutman \cite{IGA8}, called the {\em Sombor index}, and defined for a graph $G$ as
$$SO(G)=\sum_{uv\in E(G)} \sqrt{d^2_G(u)+d^2_G(v)}. $$ Since then, the problem concerning graphs with the maximal or minimal Sombor index of a given class of graphs has been studied extensively, and many results had
been obtained (see \cite{HLWH2}-\cite{SO22}).

In \cite{KCIG3,IGA8}, Gutman presented some properties of the Sombor index and characterized the maximal and minimal graphs with respect to the Sombor index. Zhou {\em et al.} \cite{TZL4,SO22} obtained the maximum and minimum Sombor indices of trees and unicyclic graphs with given maximum degree or matching number, respectively. Recently, Das and Gutman \cite{KCIG3} gave the maximum Sombor index of all quasi-tree graphs, and also obtained the second maximum and the minimum extremal trees, respectively. In this paper, we determined the maximum, the second maximum and the third maximum Sombor indices of all quasi-tree graphs in the set $\mathscr{Q}(n,k)$, respectively, which generalized some known results.


\section{Lemmas}

In this section, we first give some lemmas that will be used in the proof of main results.

\begin{lem}
	\label{H}
	Let $g^{(r)}(x,y)=\sqrt{x^2+y^2}-\sqrt{x^2+(y-r)^2}$ with $r>0$. Then $g^{(r)}(x,y)$ is monotonic decreasing in $x\geq 1$ and monotonic increasing in $y\geq 1$, respectively.
\end{lem}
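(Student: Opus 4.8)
The plan is to verify the two monotonicity claims separately by computing the relevant partial derivatives of $g^{(r)}(x,y)=\sqrt{x^2+y^2}-\sqrt{x^2+(y-r)^2}$ and checking their signs on the stated domains. Throughout we keep $r>0$ fixed and treat $g^{(r)}$ as a function of two real variables with $x\geq 1$, $y\geq 1$.

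\medskip\noindent\textbf{Monotonicity in $x$.} First I would compute
$$\frac{\partial g^{(r)}}{\partial x}=\frac{x}{\sqrt{x^2+y^2}}-\frac{x}{\sqrt{x^2+(y-r)^2}}=x\left(\frac{1}{\sqrt{x^2+y^2}}-\frac{1}{\sqrt{x^2+(y-r)^2}}\right).$$
Since $x\geq 1>0$, the sign of this derivative is the sign of $\sqrt{x^2+(y-r)^2}-\sqrt{x^2+y^2}$, which has the same sign as $(y-r)^2-y^2=r^2-2ry=r(r-2y)$. So strictly speaking the derivative in $x$ is $\leq 0$ exactly when $y\geq r/2$ and $\geq 0$ when $y\leq r/2$; hence $g^{(r)}$ is monotone decreasing in $x$ on the region where $y\geq r/2$. (I expect the intended reading is that the two arguments will be degrees $d_G(u),d_G(v)$ with the relevant range of $y$ making $y\geq r/2$, or else $r$ will itself be bounded by the degrees so that $y-r\geq 0$; in the latter case $(y-r)^2\leq y^2$ outright. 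The cleanest honest statement is: if $0\le y-r$ or more generally $y\ge r/2$ then $\partial_x g^{(r)}\le 0$, with equality only at the endpoints, so $g^{(r)}$ is decreasing in $x$.) I would present it in whichever of these forms matches how the lemma is actually invoked later.

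\medskip\noindent\textbf{Monotonicity in $y$.} Next I would compute
$$\frac{\partial g^{(r)}}{\partial y}=\frac{y}{\sqrt{x^2+y^2}}-\frac{y-r}{\sqrt{x^2+(y-r)^2}}.$$
To show this is positive it suffices to show the function $t\mapsto \dfrac{t}{\sqrt{x^2+t^2}}$ is strictly increasing in $t$, since then plugging $t=y$ versus $t=y-r<y$ gives $\partial_y g^{(r)}>0$. Indeed $\dfrac{d}{dt}\dfrac{t}{\sqrt{x^2+t^2}}=\dfrac{x^2}{(x^2+t^2)^{3/2}}>0$ for all real $t$ (using $x\geq 1$, so $x^2>0$). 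Hence $g^{(r)}$ is strictly increasing in $y$ on all of $y\geq 1$, with no extra restriction needed. This half of the argument is completely clean.

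\medskip\noindent\textbf{Main obstacle.} The only real subtlety is the $x$-monotonicity: as the computation above shows, $\partial_x g^{(r)}\le 0$ requires $y\geq r/2$ (equivalently $(y-r)^2\leq y^2$), so the statement is true as written precisely under the implicit hypothesis that $y$ is large enough relative to $r$ — which will hold in every application in the paper, since $r$ is a degree-drop and $y$ a degree, typically with $y-r\geq 1$. I would either add this hypothesis explicitly or note it inside the proof; everything else is a short derivative sign check via the auxiliary function $\varphi(t)=t/\sqrt{x^2+t^2}$ and its increasing companion $\psi(t)=1/\sqrt{x^2+t^2}$ (decreasing), which also packages the $x$-part neatly: $\partial_x g^{(r)}=x\bigl(\psi(y)-\psi(y-r)\bigr)$.
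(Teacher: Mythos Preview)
Your approach --- compute $\partial_x g^{(r)}$ and $\partial_y g^{(r)}$ and check their signs --- is exactly what the paper does. The paper simply asserts ``Since $r>0$, $y^2>(y-r)^2$'' and proceeds, so you have correctly spotted that the $x$-monotonicity tacitly needs $y\ge r/2$ (in every use of the lemma in the paper one in fact has $y-r\ge 1$, so the issue never bites); your $y$-argument via the strict increase of $t\mapsto t/\sqrt{x^2+t^2}$ is slightly cleaner and more robust than the paper's rewriting as $1/\sqrt{1+x^2/y^2}-1/\sqrt{1+x^2/(y-r)^2}$, which also silently assumes $y-r>0$.
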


\begin{proof} Since $r>0$, $y^2>(y-r)^2$, we have
	\begin{eqnarray*}
		\frac{\partial g^{(r)}(x,y)}{\partial x}&=&\frac{x}{\sqrt{x^2+y^2}}-\frac{x}{\sqrt{x^2+(y-r)^2}}<0,
	\end{eqnarray*}	
	and
	$$\frac{\partial g^{(r)}(x,y)}{\partial y}=\frac{y} {\sqrt{x^2+y^2}}-\frac{y-r} {\sqrt{x^2+(y-r)^2}}=\frac{1} {\sqrt{1+\frac{x^2}{y^2}}}-\frac{1} {\sqrt{1+\frac{x^2}{(y-r)^2}}}>0. $$
	Thus the function $g^{(r)}(x,y)$ is monotonic decreasing in $x\geq 1$ and monotonic increasing in $y\geq 1$, respectively.
\end{proof}

\begin{lem}
	\label{x,y-r}	
	Let $h(x)=\frac{1}{\sqrt{x^2+a^2}}$. Then $\sigma(x):=h(x)-h(x-1)$ is monotonic decreasing in $0< x<\frac{\sqrt{2}}{2}a$.
\end{lem}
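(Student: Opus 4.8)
The plan is to reduce the monotonicity of $\sigma(x)=h(x)-h(x-1)$ to a sign analysis of $\sigma'(x)$, and to control that sign by examining $h'$ itself. Since $h(x)=(x^2+a^2)^{-1/2}$, a direct differentiation gives $h'(x)=-x(x^2+a^2)^{-3/2}$, so $\sigma'(x)=h'(x)-h'(x-1)$. Thus $\sigma$ is monotonic decreasing on an interval precisely when $h'(x)\le h'(x-1)$ there, i.e.\ when $h'$ is itself nonincreasing on that interval. So the first step is to compute $h''(x)$ and determine where $h''(x)\le 0$: we have
\[
h''(x)=\frac{d}{dx}\!\left(-\frac{x}{(x^2+a^2)^{3/2}}\right)=\frac{2x^2-a^2}{(x^2+a^2)^{5/2}},
\]
which is negative exactly when $x^2<\tfrac{a^2}{2}$, i.e.\ for $-\tfrac{\sqrt2}{2}a<x<\tfrac{\sqrt2}{2}a$.

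From here I would argue as follows. On the interval $0<x<\tfrac{\sqrt2}{2}a$ we certainly have $x\in(-\tfrac{\sqrt2}{2}a,\tfrac{\sqrt2}{2}a)$, so $h''<0$ there; but to conclude $\sigma'(x)=h'(x)-h'(x-1)\le 0$ I also need $h'$ to be nonincreasing on the whole segment joining $x-1$ and $x$, and that segment dips below $0$ and is not entirely inside $(-\tfrac{\sqrt2}{2}a,\tfrac{\sqrt2}{2}a)$. The clean way around this is to use the evenness of $h$: $h'$ is an odd function, so $h'(x-1)=-h'(1-x)$, and hence $\sigma'(x)=h'(x)+h'(1-x)$. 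Now for $0<x<\tfrac{\sqrt2}{2}a$ both $x$ and $1-x$ need not lie in the concavity window, so instead I compare magnitudes: I claim $h'(x)\le -h'(1-x)$, equivalently (both sides being $\le 0$ after sign bookkeeping) that the function $x\mapsto h'(x)$ satisfies $h'(x)\le h'(x-1)$. I would prove this last inequality directly by cross-multiplying: $h'(x)\le h'(x-1)$ is equivalent, after clearing the positive denominators $(x^2+a^2)^{3/2}$ and $((x-1)^2+a^2)^{3/2}$, to
\[
x\big((x-1)^2+a^2\big)^{3/2}\ \ge\ (x-1)\big(x^2+a^2\big)^{3/2}.
\]
For $0<x\le 1$ the right side is $\le 0$ while the left side is $\ge 0$, so the inequality is immediate; for $1<x<\tfrac{\sqrt2}{2}a$ (which is nonempty only when $a>\sqrt2$), both sides are positive, and squaring reduces it to a polynomial inequality in $x$ that holds throughout $(1,\tfrac{\sqrt2}{2}a)$ because the constraint $x^2<\tfrac{a^2}{2}$ forces the relevant terms to have the right sign — this is where the hypothesis $x<\tfrac{\sqrt2}{2}a$ is actually used.

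The main obstacle is exactly this last polynomial verification: after squaring one gets $x^2((x-1)^2+a^2)^3\ge (x-1)^2(x^2+a^2)^3$, and expanding naively produces a degree‑eight polynomial. Rather than grinding it out, I expect the efficient route is to stay with the $h''<0$ formulation plus oddness of $h'$ and invoke a convexity/concavity comparison: write $\sigma'(x)=h'(x)-h'(x-1)=\int_{x-1}^{x}h''(t)\,dt$ and show this integral is $\le 0$ on $0<x<\tfrac{\sqrt2}{2}a$ by pairing the part of the interval where $h''<0$ against the (shorter, smaller-in-absolute-value) part where $h''>0$, using that $|h''|$ is largest near $0$ within the window. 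Once $\sigma'(x)\le 0$ is established on $0<x<\tfrac{\sqrt2}{2}a$, monotonic decrease of $\sigma$ there follows immediately, completing the proof.
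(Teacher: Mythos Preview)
Your setup mirrors the paper's exactly: compute $h''(x)=(2x^2-a^2)(x^2+a^2)^{-5/2}$, note it is negative on $(-\frac{\sqrt2}{2}a,\frac{\sqrt2}{2}a)$, and conclude $\sigma'(x)=h'(x)-h'(x-1)<0$ via the mean value theorem. The paper's proof stops right there.

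Where your proposal goes astray is the assertion that the segment $[x-1,x]$ ``is not entirely inside $(-\frac{\sqrt2}{2}a,\frac{\sqrt2}{2}a)$.'' For $0<x<\frac{\sqrt2}{2}a$ the left endpoint satisfies $x-1>-1$, so whenever $a\ge\sqrt2$ (and in the paper's only use of this lemma $a=n-2\ge 21$) the whole segment lies in the concavity window and $\sigma'(x)=h''(\xi)<0$ for some $\xi\in(x-1,x)$ --- end of proof. Even without that assumption, your own case split already finishes the job: for $0<x\le1$ one has $h'(x)<0$ and $h'(x-1)\ge0$, so $\sigma'(x)<0$ by sign alone; and for $1<x<\frac{\sqrt2}{2}a$ both $x-1$ and $x$ lie in $(0,\frac{\sqrt2}{2}a)$, which is inside the concavity window, so $h'(x)<h'(x-1)$ directly. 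The cross-multiplication and integral-comparison arguments you sketch are therefore unnecessary, and since you never complete either of them, the proposal as written is not a proof.
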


\begin{proof}
	Note that $\frac{d^2h(x)}{dx^2}=(x^2+a^2)^{-\frac{5}{2}}(2x^2-a^2)< 0$ for $0< x< \frac{\sqrt{2}}{2}a$, and hence $\frac{d\sigma(x)}{dx}<0$, which implies the function $\sigma(x)$ is monotonic decreasing in $0<x< \frac{\sqrt{2}}{2}a$.
\end{proof}	

\begin{lem}
	\label{G'>G}	
	Let $G$ be a graph, and let $uv,xy\in E(G)$ with $d_G(u)\ge d_G(x)$ and $d_G(v)\ge d_G(y)$. Set $G'=G-\{uv,xy\}+\{uy,xv\}$, then $$SO(G')\geq SO(G).$$ Moreover, equality holds if and only if $d_G(u)=d_G(x)$ or $d_G(v)=d_G(y)$.
\end{lem}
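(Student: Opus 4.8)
The plan is to compare $SO(G')$ and $SO(G)$ edge by edge. Since $G'$ is obtained from $G$ by the ``swap'' $G' = G - \{uv, xy\} + \{uy, xv\}$, the degree sequence is unchanged, so every edge not incident to $u, v, x, y$ contributes identically to $SO(G)$ and $SO(G')$; the same is true for edges incident to $u, v, x, y$ other than the four edges $uv, xy, uy, xv$ themselves (here one should note the degenerate cases, e.g.\ if $uy$ or $xv$ already lies in $G$, or if some of $u,v,x,y$ coincide, which must be excluded or handled — $uv, xy \in E(G)$ and the swap producing a graph forces $u \ne y$, $x \ne v$, etc.). Writing $a = d_G(u)$, $b = d_G(v)$, $c = d_G(x)$, $d = d_G(y)$ with $a \ge c$ and $b \ge d$, the difference reduces to
\begin{eqnarray*}
SO(G') - SO(G) &=& \Big(\sqrt{a^2 + d^2} + \sqrt{c^2 + b^2}\Big) - \Big(\sqrt{a^2 + b^2} + \sqrt{c^2 + d^2}\Big).
\end{eqnarray*}

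So the whole statement reduces to the elementary inequality $\sqrt{a^2+d^2} + \sqrt{c^2+b^2} \ge \sqrt{a^2+b^2} + \sqrt{c^2+d^2}$ whenever $a \ge c \ge 1$ and $b \ge d \ge 1$, with equality iff $a = c$ or $b = d$. To prove this I would rewrite it as $\sqrt{a^2+b^2} - \sqrt{a^2+d^2} \le \sqrt{c^2+b^2} - \sqrt{c^2+d^2}$, i.e.\ with $r = b - d \ge 0$, as $g^{(r)}(a, b) \le g^{(r)}(c, b)$ in the notation of Lemma~\ref{H}. If $r = 0$ both sides are zero and equality holds; if $r > 0$, Lemma~\ref{H} says $g^{(r)}(x, b)$ is monotonic decreasing in $x \ge 1$, so from $a \ge c \ge 1$ we get $g^{(r)}(a, b) \le g^{(r)}(c, b)$, with strict inequality precisely when $a > c$. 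Hence $SO(G') - SO(G) \ge 0$, and equality holds iff $b = d$ or $a = c$, which is exactly the claimed equality condition.

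The main thing to be careful about — more a bookkeeping obstacle than a mathematical one — is the degenerate configurations: the four vertices $u, v, x, y$ need not be distinct, and the ``new'' edges $uy, xv$ might coincide with existing edges or with each other, or an edge like $ux$ might be simultaneously of the form $xy$ and incident to $u$. One must check that in every admissible case (where $G'$ is actually a simple graph obtained by this operation) the contributions of all edges other than the four swapped ones genuinely cancel, so that the reduction to the two-variable inequality above is valid. Once that is settled, the analytic heart of the argument is a one-line application of Lemma~\ref{H}, so I expect essentially no difficulty there; the care lies entirely in justifying the edge-by-edge cancellation in full generality.
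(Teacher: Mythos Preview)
Your proof is correct and follows essentially the same route as the paper: reduce $SO(G')-SO(G)$ to the four swapped edges using that all degrees are preserved, then apply Lemma~\ref{H} with $r=d_G(v)-d_G(y)$ to compare $g^{(r)}(d_G(u),d_G(v))$ with $g^{(r)}(d_G(x),d_G(v))$. Your added discussion of degenerate configurations (coinciding vertices, pre-existing edges) is a legitimate caveat that the paper simply ignores, but the analytic core is identical.
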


\begin{proof}
	Note that $V(G)=V(G')$ and $d_G(w)=d_{G'}(w)$ for all $w\in V(G)$. Then
	\begin{eqnarray*}
		&&SO(G')-SO(G)\\
		&=&\sqrt{d^2_G(x)+d^2_G(v)}+\sqrt{d^2_G(u)+d^2_G(y)}-\sqrt{d^2_G(x)+d^2_G(y)}-\sqrt{d^2_G(u)+d^2_G(v)}.
	\end{eqnarray*}	
	Clearly, $SO(G')= SO(G)$ if and only if $d_G(u)=d_G(x)$ or $d_G(v)=d_G(y)$. So we can assume that $d_G(v)> d_G(y)$ and $d_G(u)>d_G(x)$. Let $r:={d_G(v)-d_G(y)}>0$, then by Lemma \ref{H}, $$SO(G')-SO(G)=g^{(r)}(d_G(x),d_G(v))-g^{(r)}(d_G(u),d_G(v))> 0$$ as $d_G(u)>d_G(x)$. Therefore, the proof of Lemma \ref{G'>G} is complete.
\end{proof}	

For a graph $G$, define a relation $\succ^G$ on the vertex set $V(G)$ of $G$ as follows:

$x\succ^G z$ if and only if $d_G(x)\geq d_G(z)$ and $d_G(x')\leq d_G(z')$ for any $x'\in N_G(x)\setminus\{z\}$ and $z'\in N_G(z)\setminus\{x\}$.


\begin{lem}
	\label{G*>G}	
	Let $G$ be a graph, and let $xy\notin E(G)$ and $yz\in E(G)$. Set $G^*=G-{yz}+{xy}$.
	If $x\succ^G z$, then $$SO(G^*)> SO(G).$$
\end{lem}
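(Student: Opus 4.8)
The plan is to compare $SO(G^*)$ and $SO(G)$ edge by edge, isolating the edges whose weight changes. Moving the edge $yz$ to $xy$ changes only the degrees of $x$ (which increases by $1$) and $z$ (which decreases by $1$); the degree of $y$, and of every other vertex, is unchanged. So I would write $d_G(x)=p$, $d_G(z)=q$, $d_G(y)=s$, noting $p\ge q$ by hypothesis, and split $SO(G^*)-SO(G)$ into three groups: (i) the edge at $y$ that was swapped, contributing $\sqrt{(p+1)^2+s^2}-\sqrt{q^2+s^2}$; (ii) the edges incident to $x$ in $G$ (other than possibly to $z$), each of the form $xx'$ with $d_G(x')=a$, contributing $\sqrt{(p+1)^2+a^2}-\sqrt{p^2+a^2}>0$; and (iii) the edges incident to $z$ in $G$ other than $zy$, each of the form $zz'$ with $d_G(z')=b$, contributing $\sqrt{(q-1)^2+b^2}-\sqrt{q^2+b^2}<0$. (One should check the edge $xz$, if present, separately: its weight changes from $\sqrt{p^2+q^2}$ to $\sqrt{(p+1)^2+(q-1)^2}$, and since $p+1>p\ge q>q-1$ this difference is nonnegative — actually one must be a little careful, but $(p+1)^2+(q-1)^2\ge p^2+q^2$ iff $2(p-q)+2\ge 0$, which holds.)

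The group (ii) terms are all positive, so they only help; the real work is to show the positive contribution from (i) dominates the negative contribution from (iii). Here the relation $x\succ^G z$ is exactly what I need: it guarantees that every neighbor $z'$ of $z$ (other than $y$) satisfies $d_G(z')\le d_G(x')$ for every neighbor $x'$ of $x$ — in particular, and this is the clean way to use it, $d_G(z')\le d_G(y)=s$ for each such $z'$, because $s=d_G(y)$ and $y$ is a neighbor of $x$ (recall $xy\notin E(G)$... wait, $xy\notin E(G)$, so $y$ need not be a neighbor of $x$). Let me instead use it as follows: $z\in N_G(y)$, so taking $x'$ ranging over $N_G(x)$ and using that $d_G(z')\le d_G(x')$, combined with $d_G(z)\le d_G(x)$, I can bound each negative term. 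The cleanest route: for each neighbor $z'$ of $z$ with $d_G(z')=b$, the term $\sqrt{q^2+b^2}-\sqrt{(q-1)^2+b^2}=g^{(1)}(b,q)$ in the notation of Lemma \ref{H}, and the positive term in (i) is $\sqrt{(p+1)^2+s^2}-\sqrt{q^2+s^2}\ge \sqrt{(q+1)^2+s^2}-\sqrt{q^2+s^2}=g^{(1)}(s,q+1)\ge g^{(1)}(s,q)$, using monotonicity in the first variable (since $p\ge q$) and in the second (Lemma \ref{H}). So it suffices to show $g^{(1)}(s,q)\ge \sum_{z'\in N_G(z)\setminus\{y\}} g^{(1)}(b,q)$ — but that cannot be right in general, since $z$ could have many neighbors.

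The correct accounting must therefore pair things up differently: the loss at $z$ from deleting $yz$ and decreasing $d_G(z)$ must be weighed against the gain at $y$ AND the gains at $x$ together. I expect the intended argument is more global: compare $G^*$ with the graph $G'$ obtained from $G$ by the $2$-switch of Lemma \ref{G'>G} applied to suitable edges $xx'$ and $zz'$, repeatedly, to first make all of $z$'s neighbors have small degree and $x$'s neighbors large; the condition $x\succ^G z$ is precisely the hypothesis that lets Lemma \ref{G'>G} fire in the right direction. Concretely: as long as $z$ has a neighbor $z'\ne y$, apply Lemma \ref{G'>G} — since $d_G(x)\ge d_G(z)$ and, by $x\succ^G z$, $d_G(x')\le d_G(z')$ for $x'\in N_G(x)$, a switch moving $z'$ onto $x$... this needs the degree comparison to point the right way, and that is exactly where the definition of $\succ^G$ is engineered to cooperate. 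So my step order would be: (1) reduce to the case where $z$'s only neighbor is $y$ by iterated $2$-switches justified by $x\succ^G z$ and Lemma \ref{G'>G}, each non-strictly increasing $SO$ while preserving the degrees of $x$ and $z$ and the relation; (2) in that reduced graph, $G^*$ simply deletes the pendant edge $yz$ and adds pendant-ish edge $xy$, and the direct three-term comparison above now has no group-(iii) terms, so positivity is immediate; (3) track strictness — since $x\succ^G z$ with $x\ne z$ forces either $d_G(x)>d_G(z)$ at some point or a strict degree inequality among neighbors, at least one inequality is strict, giving $SO(G^*)>SO(G)$. The main obstacle is step (1): verifying that the degree hypotheses of Lemma \ref{G'>G} really are supplied by $\succ^G$ at every iteration, and that the relation $\succ^G$ (or at least the inequality $SO(G^*)>SO(G)$ we ultimately want) is preserved along the way — handling the corner cases where $xz\in E(G)$ or where $z'=x$ will require care.
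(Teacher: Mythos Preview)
Your initial decomposition into groups (i), (ii), (iii) is exactly what the paper does, and you were one step away from finishing. The error is the sentence ``the real work is to show the positive contribution from (i) dominates the negative contribution from (iii).'' That is the wrong pairing. The paper instead uses group (ii) to dominate group (iii), with (i) left over as a strictly positive bonus. The missing observation is a simple count: with $p=d_G(x)$ and $q=d_G(z)$, the hypothesis $p\ge q$ means group (ii) has $p$ terms while group (iii) has only $q-1<p$ terms. Drop $p-(q-1)\ge 1$ of the (positive) group-(ii) terms; then pair the remaining $q-1$ terms against the $q-1$ group-(iii) terms. Each surviving (ii)-term $g^{(1)}(d_G(x'),\,p+1)$ dominates each (iii)-term $g^{(1)}(d_G(z'),\,q)$ because $d_G(x')\le d_G(z')$ (Lemma~\ref{H}: $g^{(1)}$ decreasing in the first variable) and $p+1>q$ (Lemma~\ref{H}: $g^{(1)}$ increasing in the second). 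So (ii) minus (iii) is already nonnegative, and the dropped (ii)-terms together with the strictly positive (i)-term give the strict inequality. You had all the ingredients; you just paired the wrong groups.

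Your fallback plan via iterated $2$-switches cannot work as stated. The switch in Lemma~\ref{G'>G} preserves every vertex degree, so no sequence of such switches can ever ``reduce to the case where $z$'s only neighbor is $y$'': after any number of switches $z$ still has $q$ neighbors. Even if you only intend to rearrange \emph{which} vertices are neighbors of $z$, the degree hypotheses of Lemma~\ref{G'>G} point the wrong way for the swap you describe (you would need $d_G(z)\ge d_G(x)$, not the reverse). So step (1) of your plan has a genuine gap, not just unchecked corner cases.
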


\begin{proof} Let $N_G(x)=\{x_1,x_2,\ldots,x_t\}$, $N_G(z)=\{z_0,z_1,z_2,\ldots,z_s\}$, where $z_0=y$. Notice that  $d_{G^*}(x)=d_{G}(x)+1=t+1$, $d_{G}(z)=d_{G^*}(z)+1=s+1$, and $d_{G}(v)=d_{G^*}(v)$ for $v\in V(G)\setminus\{x,z\}$,
	then
	\begin{eqnarray*}
		~~~~~~SO(G^*)-SO(G)
		&=& \sqrt{(t+1)^2+d^2_G(z_0)}-\sqrt{(s+1)^2+d^2_G(z_0)}\\
		&&+\sum^t_{i=1 }\left(\sqrt{(t+1)^2+d^2_G(x_i)}-\sqrt{t^2+d^2_G(x_i)}\right)\\
		&&+\sum^s_{j=1}\left(\sqrt{s^2+d^2_G(z_j)}-\sqrt{(s+1)^2+d^2_G(z_j)}\right)\\
		&=&\sqrt{(t+1)^2+d^2_G(z_0)}-\sqrt{(s+1)^2+d^2_G(z_0)}\\
		&&+\sum^t_{i=1}g^{(1)}(d_G(x_i),t+1)-\sum^s_{j=1 }g^{(1)}(d_G(z_j),s+1).~~~~~ ~~~~(1)
	\end{eqnarray*}	
	If $x\succ^G z$, then $d_G(x)\geq d_G(z)$ (i.e., $t\geq s+1$) and $d_G(x_i)\leq d_G(z_j)$ for any $1\leq i\leq t$ and  $0\leq j\leq s$. Note that $t+1>s+1$, then by (1) and Lemma \ref{H},
	\begin{eqnarray*}
		SO(G^*)-SO(G)&>&\sum^t_{i=1}g^{(1)}(d_G(x_i),t+1)-\sum^s_{j=1 }g^{(1)}(d_G(z_j),s+1)\\
		&>&\sum^s_{i=1}g^{(1)}(d_G(x_i),t+1)-\sum^s_{j=1 }g^{(1)}(d_G(z_j),s+1)\\\
		&\ge &\sum^s_{j=1}\left[g^{(1)}(d_G(z_j),t+1)-g^{(1)}(d_G(z_j),s+1)\right]>0.
	\end{eqnarray*}	
	Hence, the proof of Lemma \ref{G*>G} is complete.
\end{proof}	

\section{Results}

In this section, we will determine the maximum, the second maximum and the third maximum Sombor indices of all quasi-tree graphs in $\mathscr Q(n,k)$. In order to formulate our results, we need to define some graphs (see Figure 1) as follows.

Let $S_n\cong K_{1,n-1}$, a {\em star} of order $n$, and let $S'_{n}$ (see Table 1) be a graph of order $n$ obtained from $S_{n-1}$ by attaching an isolated vertex to one $1$-vertex of $S_{n-1}$. Let $S''_{n}$ (see Table 1) be a graph of order $n$ obtained from $S'_{n-1}$ by attaching an isolated vertex to the $2$-vertex of $S'_{n-1}$.

Let $Q_{n,k}$ be a graph obtained from a star $S_{n-1}$ and an isolated vertex $v$ by an edge joining $v$ and the only $(n-2)$-vertex of $S_{n-1}$,  and adding $k-1$ edges joining $v$ and the $1$-vertices of $S_{n-1}$, respectively.

Let $Q^*_{n,k}$ be a graph obtained from a star $S_{n-1}$ and an isolated vertex $v$ by adding $k$ edges joining $v$ and the $1$-vertices of $S_{n-1}$. Let $Q'_{n,1}\cong S'_{n}$, and let $Q'_{n,2}$ be a graph obtained from $Q_{n-1,2}$ by attaching an isolated vertex to one $2$-vertex in $Q_{n-1,2}$. Let $Q'_{n,k}$ ($3\le k\le n-1$) be a graph obtained from $Q_{n-1,k-1}$ by attaching an isolated vertex to one $2$-vertex and the $(k-1)$-vertex in $Q_{n-1,k-1}$.

Let $Q''_{n,1}\cong S''_{n}$, and let $Q''_{n,3}$ (see Table 1) be a graph obtained from $Q_{n-1,3}$ by attaching an isolated vertex to one $2$-vertex in $Q_{n-1,3}$. Let $Q''_{n,n-1}$ be a graph obtained from $Q'_{n-1,n-2}$ by attaching an isolated vertex to one $3$-vertex and the $(n-2)$-vertex in $Q'_{n-1,n-2}$.

Then $Q_{n,k}, Q^*_{n,k}, Q'_{n,k}\in \mathscr Q(n,k)$, $Q_{n,1}\cong
S_{n}$ and $Q^*_{n,1}\cong S'_{n}$.

\begin{center}
	\setlength{\unitlength}{1mm}
	\begin{picture}(100,50)
		\thinlines
		
		\put(0,35){\circle*{1.5}} \put(-10,25){\circle*{1.5}}
		\put(-7.5,25){\circle*{0.8}}
		\put(-6,25){\circle*{0.8}}
		\put(-4.5,25){\circle*{0.8}}
		\put(-2,25){\circle*{1.5}} \put(3,25){\circle*{1.5}}
		\put(5,25){\circle*{0.8}}
		\put(6.5,25){\circle*{0.8}}
		\put(8,25){\circle*{0.8}}
		\put(10,25){{\circle*{1.5}}}
		\put(10,45){{\circle*{1.5}}}
		
		\put(0,35){\line(1,-1){10}}
		\put(0,35){\line(-1,-1){10}}
		\put(0,35){\line(-2,-10){2}}
		\put(0,35){\line(3,-10){3}}
		\put(0,35){{\color{red}{\line(1,1){10}}}}
		\put(10,45){{\color{red}{\line(-7,-20){7}}}}
		\put(10,45){{\color{red}{\line(0,-1){20}}}}
		
		\put(-3,38){$v_1$} \put(9,47){$v$}
		\put(-11,24){$\underbrace{~~~~~~}_{n-k-1}$} \put(29,24){$\underbrace{~~~~~~}_{n-k-2}$}
		\put(3,24){$\underbrace{~~~~~~}_{k-1}$} \put(68,24){$\underbrace{~~~~~~}_{n-k-1}$}
		
		\put(-3,12){$Q_{n,k}$}

		\put(40,35){\circle*{1.5}} \put(30,25){\circle*{1.5}}
		\put(32.5,25){\circle*{0.8}}
		\put(34,25){\circle*{0.8}}
		\put(35.5,25){\circle*{0.8}}
		\put(38,25){\circle*{1.5}} \put(43,25){\circle*{1.5}}
		\put(45,25){\circle*{0.8}}
		\put(46.5,25){\circle*{0.8}}
		\put(48,25){\circle*{0.8}}
		\put(50,25){{\circle*{1.5}}}
		\put(50,45){{\circle*{1.5}}}
		
		\put(40,35){\line(1,-1){10}}
		\put(40,35){\line(-1,-1){10}}
		\put(40,35){\line(-2,-10){2}}
		\put(40,35){\line(3,-10){3}}
		\put(50,45){{\color{red}{\line(-7,-20){7}}}}
		\put(50,45){{\color{red}{\line(0,-1){20}}}}
		
		\put(38,38){$v_1$} \put(49,47){$v$}
		\put(43,24){$\underbrace{~~~~~~}_{k}$}
		
		\put(37,12){$Q^*_{n,k}$}
		
		\put(80,35){\circle*{1.5}} \put(70,25){\circle*{1.5}}
		\put(71.5,25){\circle*{0.8}}
		\put(73,25){\circle*{0.8}}
		\put(74.5,25){\circle*{0.8}}
		\put(76,25){\circle*{1.5}}
		\put(79,25){\circle*{1.5}}
		\put(80.5,25){\circle*{0.8}}
		\put(82,25){\circle*{0.8}}
		\put(83.5,25){\circle*{0.8}}
		\put(85,25){\circle*{1.5}}
		\put(90,25){{\circle*{1.5}}}
		\put(90,45){{\circle*{1.5}}}
		\put(100,25){{\circle*{1.5}}}
		
		\put(80,35){\line(1,-1){10}}
		\put(80,35){\line(-1,-1){10}}
		\put(80,35){\line(-4,-10){4}}
		\put(80,35){\line(-1,-10){1}}
		\put(80,35){\line(5,-10){5}}
		\put(90,25){\line(1,0){10}}
		\put(80,35){{\color{red}{\line(1,1){10}}}}
		\put(79,24){$\underbrace{~~~~~}_{k-3}$}
		\put(90,45){{\color{red}{\line(0,-1){20}}}}
		\put(90,45){{\color{red}{\line(-11,-20){11}}}}
		\put(90,45){{\color{red}{\line(-5,-20){5}}}}
		\put(90,45){{\color{red}{\line(--10,-20){10}}}}
		
		\put(78,38){$v_1$}
		\put(89,20){$v_2$}
		\put(99,20){$v_3$}
		\put(89,47){$v$}
		
		\put(77,12){$Q'_{n,k}$ ($k\ge 3$)}

		\put(22,2){Figure 1.~Some graphs in $\mathscr{Q}(n,k)$ }
		
	\end{picture}
\end{center}

Denote $ \phi(n,k):=(n-k-1)\sqrt{(n-1)^2+1}+(k-1)\left(\sqrt{k^2+4}+\sqrt{(n-1)^2+4}\right)$ $+\sqrt{k^2+(n-1)^2}$ for $1\le k\le n-1$.

\begin{thm}
	\label{max}
	Let $G \in \mathscr{Q}(n,k)$ with $1\le k \leq n-1$. Then
	$$SO(G) \leq  \phi(n,k)$$ with equality holds if and only if $G \cong Q_{n,k}.$
\end{thm}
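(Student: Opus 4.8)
The plan is to show that any $G \in \mathscr{Q}(n,k)$ can be transformed into $Q_{n,k}$ by a sequence of Sombor-index non-decreasing (and at key moments strictly increasing) edge operations, and that these operations never leave the class $\mathscr{Q}(n,k)$. Fix $u \in V(G)$ with $G-u$ a tree $T$ and $d_G(u)=k$. First I would treat the trivial case $k=1$, where $\mathscr{Q}(n,1)$ is just the set of trees of order $n$; here $\phi(n,1)=(n-1)\sqrt{(n-1)^2+1}$ and $Q_{n,1}\cong S_n$, so one only needs the (known, or easily re-derived via Lemma \ref{G*>G}) fact that the star maximizes $SO$ among trees. For $k \geq 2$, the idea is to argue in two stages: first make the tree $T=G-u$ a star $S_{n-1}$, then arrange the $k$ edges incident to $u$ correctly.

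For the first stage, I would pick a highest-degree vertex $v_1$ of $T$ and repeatedly apply Lemma \ref{G*>G}: whenever $T$ is not a star centered at $v_1$, there is an edge $yz \in E(T)$ with $z \neq v_1$, $y$ a non-neighbor of $v_1$ other than $z$ is replaced by the move $G^* = G - yz + v_1y$, provided $v_1 \succ^G z$. Choosing $z$ to be a vertex on a longest path from $v_1$ (so its tree-neighbors away from $v_1$ are leaves) and checking the degree conditions $d_G(v_1)\geq d_G(z)$ and $d_G(v_1\text{'s neighbors})\leq d_G(z\text{'s neighbors})$ should make $v_1 \succ^G z$ hold; one must also verify the move keeps $G-u$ a tree and preserves $d_G(u)=k$ (the edges at $u$ are untouched). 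Iterating, $T$ becomes $S_{n-1}$ with center $v_1$, strictly increasing $SO$ each step unless already a star. The second stage handles the $k$ edges from $u$ into $S_{n-1}$: using Lemma \ref{G'>G} (a "2-switch" that sorts the degree sequence) and/or Lemma \ref{G*>G} again, I would show the optimal configuration has $u$ adjacent to the center $v_1$ of $S_{n-1}$ and to $k-1$ of its leaves — which is exactly $Q_{n,k}$ — rather than, say, $Q^*_{n,k}$ (where $u$ misses the center) or other configurations. Finally I would evaluate $SO(Q_{n,k})$ directly from its degree sequence: $v_1$ has degree $n-1$, $u$ has degree $k$, there are $k-1$ vertices of degree $2$ (common neighbors of $u$ and $v_1$) and $n-k-1$ leaves; summing the edge weights gives exactly $\phi(n,k)$.

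The main obstacle I anticipate is the second stage: showing that attaching $u$ to the center $v_1$ is better than the seemingly symmetric alternative $Q^*_{n,k}$, and more generally pinning down the unique optimum among all ways of placing $k$ edges from $u$ into a star plus leaves. This is where Lemma \ref{H}'s monotonicity (increasing in the "large" coordinate $y$) must be exploited carefully: moving an edge of $u$ from a leaf to the center raises one endpoint-degree from $2$ to $3$ on several incident edges while raising $v_1$'s degree, and one needs the net effect to be positive — this likely requires a direct comparison $\phi(n,k)$ vs. $SO(Q^*_{n,k})$ plus a Lemma \ref{G*>G}-type argument, and careful bookkeeping of which vertices change degree. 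A secondary subtlety is ensuring at every step that the graph stays a quasi-tree with the *same* distinguished vertex $u$ of degree exactly $k$; since all transformations in stage one act only on edges of $T=G-u$ and those in stage two only on edges at $u$ (keeping their number equal to $k$), this should be maintainable, but it must be stated explicitly.
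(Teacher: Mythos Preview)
Your overall plan---use Lemmas~\ref{G'>G} and~\ref{G*>G} to drive an arbitrary $G\in\mathscr{Q}(n,k)$ toward $Q_{n,k}$---matches the paper's in spirit, but your Stage~1 has a genuine gap. To invoke Lemma~\ref{G*>G} you need $v_1\succ^G z$, which demands that \emph{every} neighbour of $v_1$ (other than $z$) have degree at most that of \emph{every} neighbour of $z$ (other than $v_1$). Choosing $z$ far along a longest path from $v_1$ does not deliver this: if $z$'s outward neighbour is a leaf of degree~$1$, you would need all of $v_1$'s neighbours to have degree~$\le 1$ as well, which fails in general. (Concretely: let $T$ be a spider with centre $c$ and three legs of length~$2$, and attach $u$ to $c$ and to one middle vertex $a_1$; then $c$ has the neighbour $a_1$ of degree~$3$ while any candidate $z=a_j$ has only a leaf as its outward neighbour, so $c\not\succ^G a_j$ and your move is blocked.)

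The paper sidesteps this by arguing extremally rather than iteratively: choose $G$ with $SO(G)$ \emph{maximum} in $\mathscr{Q}(n,k)$, and observe via Lemma~\ref{G'>G} that in such a $G$ one may assume $d_G(v_i)\ge d_G(v_j)\Rightarrow v_i\succ^G v_j$---otherwise an appropriate $2$-switch would strictly raise $SO$. With that blanket hypothesis available, Lemma~\ref{G*>G} applies wherever it is needed, and both of your stages collapse to one-line contradictions. This also dissolves your ``main obstacle'': the paper shows $v_1v_n\in E(G)$ \emph{first} (one application of Lemma~\ref{G*>G}, since $v_1\succ^G v_i$ for any other neighbour $v_i$ of $v_n$), and only afterwards shows $T$ is a star; with $v_n$ already attached to the eventual centre, no direct comparison of $Q_{n,k}$ against $Q^*_{n,k}$ is ever required. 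Your separate treatment of $k=1$ versus $k\ge 2$ is likewise unnecessary.
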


\begin{proof}
	First we note that if $G\cong Q_{n,k}$, then $SO(G)=SO(Q_{n,k})= \phi(n,k)$. Now we will show that if $G \in \mathscr{Q}(n,k)$ with $k \geq 1$, then $SO(G) \leq SO(Q_{n,k})$ and equality holds only if $G \cong Q_{n,k}$.
	Choose $G \in \mathscr{Q}(n,k)$ such that
	
	(C-1)~~ $SO(G)$ is as large as possible.
	
	Let $V(G)=\{v_1,\ldots,v_{n-1},v_n\}$. If $d_{G}(v_i)\ge d_{G}(v_j)$, then by Lemma \ref{G'>G} and (C-1), we can assume that $v_i\succ^{G} v_j$.
	Assume that $G-v_n$ is a tree. Denote $T=G-v_n$. Choose $v_1\in V(T)$ such that
	
	(C-2)~~ subject to (C-1), $d_{G}(v_1)$ is as large as possible.
	
	We will first show that $v_1v_n\in E(G)$. Otherwise, since $k\ge 1$, there exists some $i,~2\le i\le n-1$ such that $v_nv_i\in E(G)$. By (C-2), $d_{G}(v_1)\ge d_{G}(v_i)$, that is $v_1\succ^{G} v_i$. Let $G':=G-v_nv_i+v_nv_1$. Then $G' \in \mathscr{Q}(n,k)$. By Lemma \ref{G*>G}, $SO(G')>SO(G)$, a contradiction with (C-1).
	
	Next we will show that $v_1$ is adjacent to each vertex of $T-v_1.$ Otherwise, let $v_j\in V(T-v_1)$ with $v_1v_j\notin E(G)$, and let $v_1,v_2,v_3$ the first three vertices on the unique ($v_1$,$v_j$)-path in $T$. Set $G^*:=G-v_2v_3+v_1v_3$. Then $G^*\in \mathscr{Q}(n,k)$. Note that $v_1\succ^{G} v_2$, and hence, by Lemma \ref{G*>G}, $SO(G^*)>SO(G)$, a contradiction.
	
	Therefore $T$ is a star as $T$ is a tree, that is $G \cong Q_{n,k}$.
\end{proof}

Denote

{\small
	\noindent $~~~~ \phi'(n,1)=(n-3)\sqrt{(n-2)^2+1}+\sqrt{(n-2)^2+4}+\sqrt{5}$,
	
	\noindent $~~~~ \phi'(n,2)=(n-4)\sqrt{(n-2)^2+1}+\sqrt{(n-2)^2+4}+\sqrt{(n-2)^2+9}+\sqrt{13}+\sqrt{10}$,
	
	
	\noindent $~~~~ \phi'(n,k)=(k-2)\sqrt{k^2+4}+(k-3)\sqrt{(n-2)^2+4}+(n-k-1)\sqrt{(n-2)^2+1}\\~~~~~~~~~~~~~~~~~~~+\sqrt{k^2+(n-2)^2}+\sqrt{(n-2)^2+9}+\sqrt{k^2+9}+\sqrt{13}$~~ for $3\leq k\leq n-1$,
	
	\noindent $~~~~ \phi^*(n,k)=k(\sqrt{k^2+4}+\sqrt{(n-2)^2+4})+(n-k-2)\sqrt{(n-2)^2+1}$ for $1\leq k\leq n-2$.}

\vskip .2cm

The following inequalities are useful in the proof of another result.

\begin{prop} \label{1}
	For $1\le k\le n-2$, we have
	
	(i) $ \phi'(n,k)>  \phi^*(n,k)$ for $2\le k\le 3$;
	
	(ii) $ \phi'(n,k)<  \phi^*(n,k)$ for $4\le k\leq n-2$ and $n\geq 23$.
\end{prop}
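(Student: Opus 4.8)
The plan is to compare the two quantities $\phi'(n,k)$ and $\phi^*(n,k)$ directly as functions of $n$ and $k$, treating the difference $D(n,k):=\phi^*(n,k)-\phi'(n,k)$ as the object to sign. I would handle the three sub-ranges $k=2$, $k=3$, and $4\le k\le n-2$ separately, since the formula for $\phi'(n,k)$ is given by three distinct expressions (the $k=2$ case and the generic $k\ge 3$ case), and the inequality reverses direction between part (i) and part (ii). For each fixed small $k$ (namely $k=2,3$) the difference $D(n,k)$ is a concrete function of the single variable $n$, built from terms of the form $\sqrt{(n-2)^2+c}$ with small constants $c$, plus $n$-independent constants; the strategy is to group the $(n-2)$-dependent radicals and estimate $\sqrt{(n-2)^2+c}$ against $(n-2)$ plus a lower-order correction (e.g. $\sqrt{t^2+c}=t+\frac{c}{2t}+O(t^{-3})$), so that the leading $(n-2)$ terms cancel and one is left comparing bounded quantities; then verify the claimed sign, being careful that for $k=2,3$ it may already hold for all admissible $n$ (the statement only asserts $n\ge 23$ for part (ii)).

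For part (ii), with $4\le k\le n-2$, I would write out $D(n,k)$ explicitly by lining up the coefficients of each radical type. The coefficient of $\sqrt{k^2+4}$ is $k-(k-2)=2$ in $\phi^*-\phi'$; the coefficient of $\sqrt{(n-2)^2+4}$ is $k-(k-3)=3$; the coefficient of $\sqrt{(n-2)^2+1}$ is $(n-k-2)-(n-k-1)=-1$; and then $\phi^*$ has no further terms while $\phi'$ subtracts the extra pieces $\sqrt{k^2+(n-2)^2}$, $\sqrt{(n-2)^2+9}$, $\sqrt{k^2+9}$, $\sqrt{13}$. So
\[
D(n,k)=2\sqrt{k^2+4}+3\sqrt{(n-2)^2+4}-\sqrt{(n-2)^2+1}-\sqrt{k^2+(n-2)^2}-\sqrt{(n-2)^2+9}-\sqrt{k^2+9}-\sqrt{13},
\]
and the goal is to show $D(n,k)>0$ for $n\ge 23$. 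The natural move is to set $m:=n-2\ge k$ and combine the three pure-$m$ radicals: $3\sqrt{m^2+4}-\sqrt{m^2+1}-\sqrt{m^2+9}$, which for large $m$ behaves like $m+\frac{12-1-9}{2m}+\cdots = m+\frac{1}{m}+\cdots$, i.e. is roughly $m$ plus a small positive correction; then observe $-\sqrt{k^2+m^2}\ge -\sqrt{m^2+m^2}=-m\sqrt2$ is too weak, so instead one keeps $\sqrt{k^2+m^2}$ and pairs it against $2\sqrt{k^2+4}$ together with the leftover part of $3\sqrt{m^2+4}$. Concretely I expect to prove the sharper intermediate inequality $2\sqrt{k^2+4}+3\sqrt{m^2+4}\ge \sqrt{k^2+m^2}+\sqrt{m^2+1}+\sqrt{m^2+9}+\sqrt{k^2+9}+\sqrt{13}$ by using $\sqrt{k^2+m^2}\le \sqrt{k^2+4}+ \sqrt{m^2+4}$ when... no — rather by using the elementary bound $\sqrt{a+b}\le\sqrt a+\sqrt b$ to split $\sqrt{k^2+m^2}\le \sqrt{k^2}+\sqrt{m^2}=k+m$, and $\sqrt{k^2+9}\le k+3$, $\sqrt{m^2+1}\le m+1$, $\sqrt{m^2+9}\le m+3$, reducing everything to showing $2\sqrt{k^2+4}+3\sqrt{m^2+4}\ge k+2m+7+\sqrt{13}$; since $3\sqrt{m^2+4}\ge 3m$ and $2\sqrt{k^2+4}\ge 2k$, this is not immediate, so one needs the genuine slack $3\sqrt{m^2+4}-3m\to 0$ versus the constant gap — which is exactly why the hypothesis $n\ge 23$ (hence $m\ge 21$) enters, together with $k\ge 4$ contributing $2\sqrt{k^2+4}-2k\ge 2\sqrt{20}-8>0$ a fixed positive amount.

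The main obstacle will be getting the constants to work out cleanly: the crude bounds $\sqrt{x^2+c}\le x+\frac{c}{2x}$ lose too much for the negative terms and the reverse bound $\sqrt{x^2+c}\ge x$ loses too much for the positive terms, so I anticipate needing a careful two-sided estimate — for the positive radicals use $\sqrt{x^2+c}\ge x+\frac{c}{2\sqrt{x^2+c}}\ge x+\frac{c}{2(x+1)}$ or a similar sharp lower bound, and for the negative ones use $\sqrt{x^2+c}\le x+\frac{c}{2x}$ — and then check that the resulting rational inequality in $m$ and $k$ holds on the region $m\ge 21$, $4\le k\le m$. I would likely verify it by fixing $k$ at its worst value (probably $k=4$, since increasing $k$ increases $2\sqrt{k^2+4}-2k$ and also increases $\sqrt{k^2+m^2}$, so a monotonicity-in-$k$ argument is needed to identify the extremal $k$) and then checking the one-variable inequality in $m\ge 21$, where it reduces to a polynomial inequality after clearing the remaining radical $\sqrt{m^2+4}$. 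For part (i), the same machinery with the opposite sign and the special formulas for $\phi'(n,2)$, $\phi'(n,3)$ should give the reversed inequality for all $n\ge n_0$ with $n_0$ small; I expect $k=2,3$ to require only the leading-order cancellation plus a short check of the bounded remainder, so part (i) should be the easier half.
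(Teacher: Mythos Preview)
Your explicit formula for $D(n,k)=\phi^*(n,k)-\phi'(n,k)$ in the range $k\ge 3$ is correct and matches the paper's. Where your plan diverges from a workable argument is in the use of Taylor-type or subadditivity bounds for part~(ii). The boundary value is $D(23,4)\approx 0.0092$, so any estimate that sacrifices even a hundredth of a unit will fail exactly at the corner $(n,k)=(23,4)$ where the inequality is tightest. In particular, replacing $\sqrt{k^2+m^2}$ by $k+m$ or $\sqrt{m^2+9}$ by $m+3$ throws away far more than the available margin (your intermediate reduction to $2\sqrt{k^2+4}+3\sqrt{m^2+4}\ge k+2m+7+\sqrt{13}$ also contains an arithmetic slip: the right side should be $2k+3m+7+\sqrt{13}$, which makes the gap even worse). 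Likewise, after fixing $k=4$ you cannot ``clear the remaining radical $\sqrt{m^2+4}$'' to get a polynomial inequality, because four distinct radicals in $m$ survive.

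The paper avoids all of this by proving two clean monotonicities and then checking a single point. It shows $\partial D/\partial k>0$ by the elementary inequality $\frac{2k}{\sqrt{k^2+4}}>\frac{k}{\sqrt{k^2+9}}+\frac{k}{\sqrt{k^2+(n-2)^2}}$, and $\partial D/\partial n>0$ by observing that $\frac{3(n-2)}{\sqrt{(n-2)^2+4}}-\frac{n-2}{\sqrt{(n-2)^2+9}}-\frac{n-2}{\sqrt{(n-2)^2+1}}>0$ follows from the strict concavity of $x\mapsto 1/\sqrt{x^2+a^2}$ on $(0,a/\sqrt2)$ (this is the paper's Lemma~\ref{x,y-r}), while the remaining term $-\frac{n-2}{\sqrt{(n-2)^2+k^2}}$ is dominated by what is left over. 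With both partials positive, $D(n,k)\ge D(23,4)>0$ is verified numerically. For part~(i) the paper is also shorter than your asymptotic plan: at $k=2$ both summands of $D(n,2)=\big(\sqrt{(n-2)^2+4}-\sqrt{(n-2)^2+9}\big)+\big(2\sqrt8-\sqrt{13}-\sqrt{10}\big)$ are negative, and at $k=3$ one reduces (after discarding a negative constant) to $g^{(1)}(n-2,2)-g^{(1)}(n-2,3)<0$, which is just the monotonicity of $g^{(1)}(x,y)$ in $y$ from Lemma~\ref{H}. Your overall instinct --- reduce to the extremal $(n,k)$ via monotonicity and check the boundary --- is exactly right; the fix is to establish that monotonicity by differentiating rather than by bounding the radicals.
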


\begin{proof}	
	Let $f(n,k)= \phi^*(n,k)-  \phi'(n,k)$. Then
	{\small	\begin{eqnarray*}
			f(n,2)&=&\sqrt{(n-2)^2+4}-\sqrt{(n-2)^2+9}+2\sqrt{8}-\sqrt{13}-\sqrt{10}<0,\\
			f(n,3)&=&3\left(\sqrt{3^2+4}+\sqrt{(n-2)^2+4}\right)+(n-5)\sqrt{(n-2)^2+1}\\
			&&-(n-4)\sqrt{(n-2)^2+1}-2\sqrt{(n-2)^2+9}-\sqrt{18}-2\sqrt{13}\\
			&<&2\sqrt{(n-2)^2+4}-\sqrt{(n-2)^2+1}-\sqrt{(n-2)^2+9}\\
			&=&g^{(1)}{(n-2,2)}-g^{(1)}{(n-2,3)}<0,\\
			f(n,k)
			&=&2\sqrt{k^2+4}+3\sqrt{(n-2)^2+4}-\sqrt{(n-2)^2+k^2}-\sqrt{(n-2)^2+9}\\
			&&-\sqrt{(n-2)^2+1}-\sqrt{k^2+9}-\sqrt{13}
	\end{eqnarray*}}	 for $4\le k\leq n-2$. In particular, $f(23,4)
	=2\sqrt{20}+3\sqrt{21^2+4}-\sqrt{21^2+4^2}-\sqrt{21^2+9}
	-\sqrt{21^2+1}-5-\sqrt{13}\approx 0.0092>0.$	
	
	If $k\geq 4$ and $n\geq 23$, then by Lemma \ref{x,y-r},
	{\small \begin{eqnarray*}
			\frac{\partial f(n,k)}{\partial n}&=&\frac{3(n-2)}{\sqrt{(n-2)^2+2^2}}-\frac{n-2}{\sqrt{(n-2)^2+3^2}}-\frac{n-2}{\sqrt{(n-2)^2+1^2}}
			-\frac{n-2}{\sqrt{(n-2)^2+k^2}}\\
			&>& \frac{2(n-2)}{\sqrt{(n-2)^2+2^2}}-\frac{n-2}{\sqrt{(n-2)^2+3^2}}-\frac{n-2}{\sqrt{(n-2)^2+1^2}}\\
			&=&(n-2)(\sigma(2)-\sigma(3))>0,
	\end{eqnarray*}}
	which implies $f(n,k)$ is monotonic increasing in $n\ge 23$. On the other hand, since
	{\small\begin{eqnarray*}
			\frac{\partial f(n,k)}{\partial k}&=&\frac{2k}{\sqrt{k^2+2^2}}-\frac{k}{\sqrt{(n-2)^2+k^2}}-\frac{k}{\sqrt{k^2+3^2}}> 0,
	\end{eqnarray*}	} we have
	$f(n,k)$ is strictly monotonic increasing in $k\ge 1$. Therefore, $f(n,k)\ge f(23,k)\ge f(23,4)>0$ for $k\geq 4$ and $n\geq 23$.
\end{proof}

{\bf Note.} $ \phi'(n,k)$ and $ \phi^*(n,k)$ are incomparable for $k\geq 4$ and $n\leq 22$. For example, $ \phi'(9,5)> \phi^*(9,5)$ and $ \phi'(22,5)< \phi^*(22,5)$ as
\begin{eqnarray*}
	f(9,5)
	&=&2\sqrt{29}+3\sqrt{53}-\sqrt{74}-\sqrt{58}
	-\sqrt{50}-\sqrt{34}-\sqrt{13}\approx-0.1150<0,\\
	f(22,5)
	&=&2\sqrt{29}+3\sqrt{404}-\sqrt{425}-\sqrt{409}
	-\sqrt{401}-\sqrt{34}-\sqrt{13}\approx 0.7688>0.
\end{eqnarray*}	

\begin{thm}\label{max2}
	Let $G \in \mathscr{Q}(n,k)\setminus \{Q_{n,k}\}$ with $n\ge 5$ and $1\le k \leq n-1$. Then
	$$~~~~~~~~~~~~~~~~~~SO(G)\leq\left\{
	\begin{array}{ll}
		\phi'(n,k),      & {if~ k=1,2,3,n-1};~~~~~~~~~~~~~~~~~~~~~~~~~~~~~~~(2)\\
		\phi^*(n,k),     & {if~ 4\le k\leq n-2 ~and ~n\geq 23}.~~~~~~~~~~~~~~~~~~~(3)	
	\end{array} \right. $$
	Moreover, equality holds in (2) (or resp. (3)) if and only if $G\cong Q'_{n,k}$ (or resp. $G\cong Q^*_{n,k}$).
\end{thm}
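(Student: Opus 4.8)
The plan is to follow the same extremal‑graph strategy used in Theorem \ref{max}, but now restricted to the class $\mathscr{Q}(n,k)\setminus\{Q_{n,k}\}$, and then compare the two candidate near‑extremal graphs $Q'_{n,k}$ and $Q^*_{n,k}$ using Proposition \ref{1}. First I would choose $G\in\mathscr{Q}(n,k)\setminus\{Q_{n,k}\}$ with $SO(G)$ maximum. As before, by Lemma \ref{G'>G} I may assume that whenever $d_G(v_i)\ge d_G(v_j)$ we have $v_i\succ^G v_j$, and I fix $v_n$ with $T:=G-v_n$ a tree, and choose $v_1\in V(T)$ of maximum degree in $G$. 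The argument of Theorem \ref{max} (applying Lemma \ref{G*>G}) shows that if $v_1$ were adjacent to $v_n$ \emph{and} to every vertex of $T-v_1$, then $G\cong Q_{n,k}$, which is excluded. So exactly one of those two adjacencies fails, and the whole proof splits into understanding, in each case, which graph maximizes $SO$ subject to the failure.

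Next I would run the "push toward the star" moves as far as they legally go. In the generic situation one shows $T$ must be a star with a pendant path of length $2$ attached (a "broom"), i.e. one vertex $v_2$ at distance $1$ from $v_1$ carries a single pendant $v_3$, all other neighbours of $v_1$ in $T$ being leaves, and the $k$ edges from $v_n$ go to the vertices of largest available degree: either $v_n$ is joined to $v_1$ plus $k-1$ leaves together with $v_2$ (giving, after the small perturbation, $Q'_{n,k}$), or $v_n$ is joined to $k$ leaves only (giving $Q^*_{n,k}$). Concretely: repeated use of Lemma \ref{G*>G} forces all ``excess'' structure of $T$ to collapse onto $v_1$ until only one length‑$2$ branch survives, and another application reroutes the edges at $v_n$ onto the highest‑degree endpoints; each non‑optimal configuration is killed by exhibiting a strictly $SO$‑increasing switch that stays inside $\mathscr{Q}(n,k)\setminus\{Q_{n,k}\}$. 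This reduces the maximization to the finite comparison between $SO(Q'_{n,k})=\phi'(n,k)$ and $SO(Q^*_{n,k})=\phi^*(n,k)$ (with the small separate cases $k=1$: $Q'_{n,1}\cong S'_n$; $k=2$; $k=3$; $k=n-1$ handled directly by the explicit degree sequences, where $Q^*_{n,k}$ is not defined or not relevant).

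Finally I would invoke Proposition \ref{1}: for $2\le k\le 3$ we have $\phi'(n,k)>\phi^*(n,k)$, so the maximum is $\phi'(n,k)$ attained only at $Q'_{n,k}$; for $4\le k\le n-2$ and $n\ge 23$ we have $\phi'(n,k)<\phi^*(n,k)$, so the maximum is $\phi^*(n,k)$ attained only at $Q^*_{n,k}$; and the boundary cases $k=1$ and $k=n-1$ are computed by hand against all competitors with the appropriate degree sequence, giving $\phi'(n,k)$ and the graph $Q'_{n,k}$ ($Q'_{n,1}\cong S'_n$, $Q''$‑type competitors being strictly smaller by Lemma \ref{G'>G} / Lemma \ref{G*>G}). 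Uniqueness of the extremal graph follows because every switch used is \emph{strictly} $SO$‑increasing except the degree‑preserving ones of Lemma \ref{G'>G}, whose equality case pins down the graph up to isomorphism.

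The main obstacle will be the bookkeeping in the second paragraph: after removing $Q_{n,k}$ one has two genuinely different ``second‑best'' shapes, and one must verify that no \emph{other} configuration (for instance a star with two separate pendants, or $v_n$ attached to a mixture of leaves and the degree‑$2$ vertex in the wrong pattern) can beat both $Q'_{n,k}$ and $Q^*_{n,k}$; this requires carefully choosing, for each such configuration, an explicit edge switch of the type covered by Lemma \ref{G'>G} or Lemma \ref{G*>G} and checking it preserves membership in $\mathscr{Q}(n,k)\setminus\{Q_{n,k}\}$ and the value $d_G(u)=k$. A secondary nuisance is that $\phi'$ and $\phi^*$ are incomparable for $k\ge 4$, $n\le 22$ (as the Note points out), which is exactly why the statement restricts (3) to $n\ge 23$; I would make sure the reduction itself never silently assumes $n\ge 23$, so that the clean cases $k\in\{1,2,3,n-1\}$ remain valid for all $n\ge 5$.
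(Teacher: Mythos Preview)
Your proposal is correct and follows essentially the same strategy as the paper: choose an extremal $G$, use Lemmas \ref{G'>G} and \ref{G*>G} to reduce it to one of the two candidates $Q'_{n,k}$ or $Q^*_{n,k}$, and then invoke Proposition \ref{1} to decide which one wins in each range of $k$. The paper organises the case split slightly differently from your sketch---it branches on whether $T$ is a star (Case 1, forcing $v_1v_n\notin E(G)$ and hence $G\cong Q^*_{n,k}$) or not (Case 2, where Fact 1 forces $T\cong S'_{n-1}$ and Facts 2--3 force $G\cong Q'_{n,k}$), rather than on two edge-patterns for $v_n$ within a single broom---and it handles Facts 2 and 3 by short direct computations with Lemma \ref{H} instead of appealing to Lemma \ref{G*>G}, which is exactly the ``bookkeeping'' obstacle you anticipated.
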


\begin{proof}
	First we note if $G\cong Q'_{n,k}$ (or resp. $G\cong Q^*_{n,k}$), then $SO(G)=SO(Q'_{n,k})= \phi'(n,k)$ (or resp. $SO(G)=SO(Q^*_{n,k})= \phi^*(n,k)$).
	
	Now we have to prove that if $G \in \mathscr{Q}(n,k)\setminus \{Q_{n,k}\}$, then (2) (or resp. (3)) holds and  equality in (2) (or resp. (3)) holds only if $G\cong Q'_{n,k}$ (or resp. $G\cong Q^*_{n,k}$). Choose $G \in \mathscr{Q}(n,k)\setminus \{Q_{n,k}\}$ such that
	
	(D-1)~~ $SO(G)$ is as large as possible.
	
	Let $V(G)=\{v_1,\ldots,v_{n-1},v_n\}$. If $d_G(v_i)\ge d_G(v_j)$, then we can assume that $v_i\succ^G v_j$ by Lemma \ref{G'>G} and (D-1). Suppose, without loss of generality, that $G-v_n$ is a tree. Set $T:=G-v_n$. Choose $v_1\in V(T)$ such that
	
	(D-2)~~subject to (D-1), $d_G(v_1)$ is as large as possible.
	
	We consider the following two cases.
	
	\vskip .2cm	
	{\bf Case 1.} $T$ is a star.
	
	\vskip .2cm	
	In this case, $v_1v_n\notin E(G)$ as $G \not\cong Q(n,k)$. Then $k\leq n-2$ and $G\cong Q^*(n,k)$ and thus the assertion holds for $4\le k\le n-2$. Note that $ Q^*(n,1)\cong Q'(n,1)$, and hence the assertion holds for $k=1$. If $2\le k\leq 3$, then by Proposition \ref{1}, $SO(G)=  \phi^*(n,k)<  \phi'(n,k)=SO(Q'_{n,k})$.
	
	\vskip .2cm	
	
	{\bf Case 2.} $T$ is not a star.
	
	\vskip .2cm
	
	In this case, we will first show that  $v_1v_n\in E(G)$. Otherwise, there exists some $i$, $2\leq i \leq n-1$ such that $v_nv_i\in E(G)$ as $k\geq 1$. By (D-2), $d_G(v_1)\geq d_G(v_i)$, that is $v_1\succ^G v_i$. Let  $G':=G-v_nv_i+v_nv_1$, then $G' \in \mathscr{Q}(n,k)\setminus \{Q_{n,k}\}$. By Lemma \ref{G*>G}, we can get that  $SO(G')>SO(G)$, a contradiction with (D-1). So $v_1v_n\in E(G)$. Next we will show some facts.
	
	\vskip .2cm
	
	{\bf Fact 1.} $T\cong S'_{n-1}$.
	
	\vskip .2cm
	
	{\bf Proof of Fact 1.} First we claim that $d_T(v_1,v_i)\le 2$ for $v_i\in V(T)$. Otherwise, there exists some vertex $v_i\in V(T)$ such that $d_T(v_1,v_i)\geq 3$. Let $P':=v_1v_2v_3v_4\cdots v_i$ be the unique $(v_1,v_i)$-path (possibly $v_4=v_i$). By the choice of $v_1$, $d_G(v_1)\geq d_G(v_3)$, i.e., $v_1\succ^G v_3$. Set $G'=G-v_3v_4+v_1v_4$. Then $G'\in \mathscr{Q}(n,k)\setminus \{Q_{n,k}\}$. By Lemma \ref{G*>G}, $SO(G')>SO(G)$, a contradiction. So $d_T(v_1,v_i)\le 2$ for $v_i\in V(T)$.
	
	Since $T$ is not a star, there exists some $v_i\in V(T)$ such that $d_T(v_1,v_i)=2$. Let $v_1v_2v_i$ be the $(v_1,v_i)$-path, and $V'=\{v_i \in V(T)~|~d_T(v_1,v_i)=2\}$. Then $V'=\{v_i\}$. Otherwise, set $G''=G-v_2v_i+v_1v_i$. Then $G''\in \mathscr{Q}(n,k)\setminus \{Q_{n,k}\}$. Note that $v_1\succ^G v_2$, and hence, by Lemma \ref{G*>G}, $SO(G'')>SO(G)$, a contradiction. So $T \cong S'_{n-1}$.\q
	
	\vskip .2cm
	
	By Fact 1, we let $v_2$ be the only vertex of degree 2 and $v_3$ the only vertex being nonadjacent to $v_1$ in $T$. If $k=1$, then $G\cong Q'_{n,1}$, and the assertion holds for $k=1$. So, in the following, we assume that $k \geq 2$.
	
	\vskip .2cm	
	{\bf Fact 2.} {\em $v_nv_2\in E(G)$.}
	
	\vskip .2cm
	
	{\bf Proof of Fact 2.} If $v_nv_2\notin E(G)$, then $k\le n-2$ and there exists a vertex $v_j\in V(T)\setminus \{v_1,v_2\}$ such that $v_nv_j\in E(G)$ as $k\geq 2$. Set $G'=G-v_nv_j+v_nv_2$. Then $G'\in \mathscr{Q}(n,k)\setminus \{Q_{n, k}\}$. If $j=3$, then
	$SO(G')-SO(G)=\sqrt{(n-2)^2+3^2}+\sqrt{k^2+3^2}+\sqrt{10}-\sqrt{(n-2)^2+2^2}-\sqrt{k^2+2^2}-\sqrt{8}
	>0,$
	and if $j\neq 3$, then by Lemma \ref{H},
	\begin{eqnarray*}
		SO(G')-SO(G)
		&=&\sqrt{(n-2)^2+3^2}+\sqrt{(n-2)^2+1^2}+\sqrt{k^2+3^2}+\sqrt{1^2+3^2}\\
		&& -2\sqrt{(n-2)^2+2^2}-\sqrt{k^2+2^2}-\sqrt{1^2+2^2}\\
		&=&g^{(1)}(n-2,3)-g^{(1)}(n-2,2)+g^{(1)}(k,3)+g^{(1)}(1,3)>0,
	\end{eqnarray*}
	a contradiction. Therefore $v_nv_2\in E(G)$.\q
	
	\vskip .2cm
	
	{\bf Fact 3.} {\em If $k\ge 3$, then $v_nv_3\in E(G)$.}
	
	\vskip .2cm
	
	{\bf Proof of Fact 3.} Assume that $v_nv_3\notin E(G)$. Then $k\le n-2$. Since $k\geq 3$, there exists a vertex $v_4\in V(T)\setminus \{v_1,v_2,v_3\}$ such that $v_nv_4\in E(G)$. If $n=5$, then $k=3$ and $G\cong Q'_{n,3}$, and thus we can assume that $n\ge 6$. Set $G''=G-v_nv_4+v_nv_3$. Then $G''\in \mathscr{Q}(n,k)\setminus \{Q_{n,k}\}$. By Lemma \ref{H},
	\begin{eqnarray*}
		SO(G'')-SO(G)
		&=&\sqrt{(n-2)^2+1^2}+\sqrt{2^2+3^2}-\sqrt{(n-2)^2+2^2}-\sqrt{1^2+3^2}\\
		&=&g^{(1)}(3,2)-g^{(1)}(n-2,2)>0 	
	\end{eqnarray*} as $n-2>3$, a contradiction. Therefore $v_nv_3\in E(G)$.\q
	
	By Facts 2 and 3, $G\cong Q'_{n,k}$, thus the assertion holds for $k=2,3,n-1$. If $4\le k\le n-2$ and $n\ge 23$, then by Proposition \ref{1}, $SO(G)= \phi'(n,k)< \phi^*(n,k)$. Therefore the proof of Theorem \ref{max2} is completed.
\end{proof}

From the proof of Theorem \ref{max2}, we have the following result.

\begin{thm}\label{max3-1}
	Let $G \in \mathscr{Q}(n,k)\setminus \{Q_{n,k},Q^*_{n,k}\}$ with $4\le k\leq n-2$ and $n\geq 23$. Then
	$$SO(G)\leq  \phi'(n,k)$$
	with equality holds if and only if $G\cong Q'_{n,k}$.
\end{thm}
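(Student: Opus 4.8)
The plan is to revisit the case analysis from the proof of Theorem~\ref{max2} and observe that it already isolates, among graphs in $\mathscr{Q}(n,k)$, exactly three candidates for large Sombor index: $Q_{n,k}$ (the maximizer, by Theorem~\ref{max}), $Q^*_{n,k}$ (the Case~1 graph when $T$ is a star but $v_1v_n\notin E(G)$), and $Q'_{n,k}$ (the Case~2 graph). Since we are now excluding both $Q_{n,k}$ and $Q^*_{n,k}$, the argument of Theorem~\ref{max2} shows that any $SO$-maximal $G$ in the restricted family must be $Q'_{n,k}$, so it only remains to (a) confirm $Q'_{n,k}$ genuinely lies in $\mathscr{Q}(n,k)\setminus\{Q_{n,k},Q^*_{n,k}\}$ for $4\le k\le n-2$, and (b) check that $SO(Q'_{n,k})=\phi'(n,k)$.

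First I would take $G\in\mathscr{Q}(n,k)\setminus\{Q_{n,k},Q^*_{n,k}\}$ with $SO(G)$ as large as possible, and set up the same notation as in Theorem~\ref{max2}: $T=G-v_n$ a tree, $v_1\succ^G v_j$ whenever $d_G(v_1)\ge d_G(v_j)$, and $v_1$ of maximum degree in $T$ subject to (D-1). Then I would split into the two cases ``$T$ is a star'' and ``$T$ is not a star.'' In the first case, the only quasi-tree arising (with $G\not\cong Q_{n,k}$) is $Q^*_{n,k}$, which is now excluded, so this case is vacuous; here I should be slightly careful about whether $T$ being a star with $v_1v_n\in E(G)$ could give something other than $Q_{n,k}$, but adding $k$ edges from $v_n$ to a star $S_{n-1}$ with one of them hitting the center yields precisely $Q_{n,k}$ up to isomorphism (Lemma~\ref{G'>G} lets us assume the remaining $k-1$ neighbours are leaves), so it is. In the second case, Facts~1--3 from the proof of Theorem~\ref{max2} go through verbatim: $T\cong S'_{n-1}$, $v_nv_2\in E(G)$, and (since $k\ge 4\ge 3$) $v_nv_3\in E(G)$; but Facts~2 and 3 actually pin down the remaining $k-3$ neighbours of $v_n$ as leaves adjacent to $v_1$ via Lemma~\ref{G'>G}/(D-1), forcing $G\cong Q'_{n,k}$. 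Thus $SO(G)\le SO(Q'_{n,k})=\phi'(n,k)$ with equality iff $G\cong Q'_{n,k}$.

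The main obstacle, such as it is, is a bookkeeping one rather than a genuine mathematical difficulty: I must verify that $Q'_{n,k}$ is not isomorphic to $Q_{n,k}$ or $Q^*_{n,k}$ in the range $4\le k\le n-2$, so that excluding those two graphs does not accidentally exclude $Q'_{n,k}$ itself and leave the supremum unattained. This follows from a degree-sequence comparison: in $Q'_{n,k}$ the vertex $v_1$ has degree $n-2$, $v_n$ has degree $k$, and there is a vertex ($v_2$) of degree $3$ together with a vertex ($v_3$) of degree $3$ as well, a configuration absent from both $Q_{n,k}$ (where the non-leaf, non-$v$, non-$v_1$ vertices all have degree $2$) and $Q^*_{n,k}$ (same). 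One should also double-check the boundary $n=5$ and small-$k$ degeneracies invoked inside Facts~2--3 are irrelevant here since we assume $n\ge 23$ and $k\ge 4$, so the path $v_1v_2v_3v_4$ and the leaf $v_4$ used in Fact~3 genuinely exist. With these verifications in place the theorem is immediate from the proof of Theorem~\ref{max2}, which is exactly what the sentence ``From the proof of Theorem~\ref{max2}'' is signalling.
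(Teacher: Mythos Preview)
Your proposal is correct and follows exactly the route the paper intends: the one-line ``proof'' in the paper is precisely the observation that in the extremal analysis of Theorem~\ref{max2}, Case~1 ($T$ a star) produces only $Q_{n,k}$ or $Q^*_{n,k}$, both now excluded, while Case~2 together with Facts~1--3 forces $G\cong Q'_{n,k}$; since every graph transformation used in Case~2 keeps $T$ non-star, the modified graphs remain in $\mathscr{Q}(n,k)\setminus\{Q_{n,k},Q^*_{n,k}\}$ and the contradictions still fire. One small slip in your bookkeeping: in $Q'_{n,k}$ the vertex $v_3$ has degree $2$, not $3$; it is the single degree-$3$ vertex $v_2$ that already distinguishes the degree sequence of $Q'_{n,k}$ from those of $Q_{n,k}$ and $Q^*_{n,k}$ when $4\le k\le n-2$ and $n\ge 23$.
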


Denote

{\small
	$\phi''(n,1)=(n-4)\sqrt{(n-3)^2+1}+\sqrt{(n-3)^2+9}+2\sqrt{10}$,
	
	
	$\phi''(n,3)=(n-5)\sqrt{(n-2)^2+1}+\sqrt{(n-2)^2+4}+2\sqrt{(n-2)^2+9}+\sqrt{10}+\sqrt{13}+\sqrt{18}$,
	
	$\phi''(n,n-1)=(n-3)\sqrt{(n-1)^2+4}+\sqrt{(n-1)^2+16}+\sqrt{(n-1)^2+(n-3)^2}\\~~~~~~~~~~~~~~~~~~~~~~~~~+\sqrt{(n-3)^2+16}+(n-5)\sqrt{(n-3)^2+4}+2\sqrt{20}$.}

\begin{prop} \label{2}
	For $n\ge 7$, we have
	
	(i) $(n-5)\sqrt{(n-3)^2+1}+\sqrt{(n-3)^2+4}+\sqrt{(n-3)^2+16}+\sqrt{20}+2\sqrt{17}< \phi^*(n,2)$;
	
	(ii) $(n-5)\sqrt{(n-3)^2+1}+\sqrt{(n-3)^2+9}+\sqrt{(n-3)^2+16}+\sqrt{20}+\sqrt{17}+\sqrt{13}+5<\phi''(n,3)$.
\end{prop}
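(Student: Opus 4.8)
The plan is to handle (i) and (ii) in parallel, reducing each to a one-variable monotonicity statement. Let $f_1(n)$ be the right-hand side minus the left-hand side of (i), so that $f_1(n)=\phi^*(n,2)-(n-5)\sqrt{(n-3)^2+1}-\sqrt{(n-3)^2+4}-\sqrt{(n-3)^2+16}-\sqrt{20}-2\sqrt{17}$, and likewise let $f_2(n)=\phi''(n,3)-(n-5)\sqrt{(n-3)^2+1}-\sqrt{(n-3)^2+9}-\sqrt{(n-3)^2+16}-\sqrt{20}-\sqrt{17}-\sqrt{13}-5$; then (i) and (ii) are exactly the assertions $f_1(n)>0$ and $f_2(n)>0$ for $n\ge 7$. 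Inserting the formulas for $\phi^*(n,2)$ and $\phi''(n,3)$ (and cancelling the common $\sqrt{13}$ in $f_2$), each $f_i$ is a difference in which the leading behaviour comes from weighing the $\sqrt{(n-2)^2+1}$-terms on the right against the $(n-5)$ copies of $\sqrt{(n-3)^2+1}$ on the left. I would first check $f_i(7)>0$, then prove that each $f_i$ is strictly increasing on $[7,\infty)$; the two facts together give the inequalities.

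For the base case, put $n=7$, so that $(n-3)^2=16$ and $(n-2)^2=25$; since $\phi^*(7,2)=2\sqrt{8}+2\sqrt{29}+3\sqrt{26}$ and $\phi''(7,3)=2\sqrt{26}+\sqrt{29}+2\sqrt{34}+\sqrt{10}+\sqrt{13}+\sqrt{18}$, one finds $f_1(7)=2\sqrt{8}+2\sqrt{29}+3\sqrt{26}-4\sqrt{17}-2\sqrt{20}-\sqrt{32}\approx 0.63>0$ and $f_2(7)=2\sqrt{26}+\sqrt{29}+2\sqrt{34}+\sqrt{10}+\sqrt{18}-3\sqrt{17}-\sqrt{32}-\sqrt{20}-10\approx 2.15>0$.

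For the monotonicity I would differentiate in $n$ and split $f_i'(n)$ into summands that are each visibly nonnegative, using three elementary facts. First, for any constant $c>0$ the difference $\sqrt{(n-2)^2+c}-\sqrt{(n-3)^2+c}$ equals $g^{(1)}(\sqrt{c},\,n-2)$, hence is positive and increasing in $n$ by Lemma \ref{H}; moreover $\frac{n-2}{\sqrt{(n-2)^2+c}}>\frac{n-3}{\sqrt{(n-3)^2+c'}}$ whenever $0<c\le c'$, since $\frac{n-2}{\sqrt{(n-2)^2+c}}\ge\frac{n-2}{\sqrt{(n-2)^2+c'}}>\frac{n-3}{\sqrt{(n-3)^2+c'}}$ by monotonicity of $t\mapsto t/\sqrt{t^2+c'}$. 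Second, as a consequence, $\frac{2(n-2)}{\sqrt{(n-2)^2+c}}>\frac{n-3}{\sqrt{(n-3)^2+c}}+\frac{n-3}{\sqrt{(n-3)^2+c'}}$ for $0<c\le c'$, which handles the derivative contributions of the doubled terms $2\sqrt{(n-2)^2+4}$ in $\phi^*(n,2)$ and $2\sqrt{(n-2)^2+9}$ in $\phi''(n,3)$. Third, in $f_1'$ the pair $\frac{d}{dn}[(n-4)\sqrt{(n-2)^2+1}-(n-5)\sqrt{(n-3)^2+1}]$ produces, besides $\sqrt{(n-2)^2+1}-\sqrt{(n-3)^2+1}>0$, the term $\beta(n-2)-\beta(n-3)$ with $\beta(t)=\frac{(t-2)t}{\sqrt{t^2+1}}$, and a one-line computation gives $\beta'(t)=\frac{t^3+2t-2}{(t^2+1)^{3/2}}>0$ for $t\ge 1$, so this term is positive as well. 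Assembling the summands yields $f_1'(n)>0$ and $f_2'(n)>0$ for all $n\ge 7$, and with the base case this gives $f_1(n)>0$ and $f_2(n)>0$, i.e. (i) and (ii). (One could alternatively sign the derivative terms using Lemma \ref{x,y-r}, exactly as in the proof of Proposition \ref{1}.)

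The only real subtlety is to arrange this split so that every summand of $f_i'(n)$ comes out unconditionally nonnegative: a naive argument that each square root alone is increasing in $n$ fails, because the left-hand sides of (i) and (ii) also grow with $n$, so the grouping must pit the right-hand coefficients (namely $n-4$, $n-5$, and the doubled coefficients) against the left-hand $\sqrt{(n-3)^2+c}$ terms exactly as described. The estimates are moreover tight near the left endpoint---the gaps at $n=7$ are only about $0.63$ and $2.15$---so no crude bound settles the smallest $n$ on its own; the point of the derivative splitting is precisely that, once it is in place, nothing beyond the single evaluation at $n=7$ is needed. A purely hands-on alternative---bounding each square root coarsely, proving $f_i(n)>0$ for all large $n$, and then checking $n=7,8,9$ by hand---is also available, but messier.
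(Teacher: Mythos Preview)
Your proof is correct and follows the same strategy as the paper: both show that $\phi^*(n,2)-\text{LHS}$ (respectively $\phi''(n,3)-\text{LHS}$) is positive at $n=7$ and increasing for $n\ge 7$. The only difference is bookkeeping in the monotonicity step---the paper writes the gap as $(n-5)\,g^{(1)}(1,n-2)$ (a product of two nonnegative increasing factors, handled without differentiation) minus a residual piece shown decreasing via a derivative sign, which sidesteps your $\beta$-function computation by absorbing the stray copy of $\sqrt{(n-2)^2+1}$ into the residual rather than into the product-rule pairing.
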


\begin{proof}
	(i) Note that
	{\begin{eqnarray*}
			~~~~ &&(n-5)\sqrt{(n-3)^2+1}+\sqrt{(n-3)^2+4}+\sqrt{(n-3)^2+16}+\sqrt{20}+2\sqrt{17}\\
			&=& \phi^*(n,2)-2\sqrt{8}-2\sqrt{(n-2)^2+4}-(n-4)\sqrt{(n-2)^2+1}\\
			&&+(n-5)\sqrt{(n-3)^2+1}+\sqrt{(n-3)^2+4}+\sqrt{(n-3)^2+16}+2\sqrt{17}+\sqrt{20}\\
			&=&\phi^*(n,2)+(n-5)(\sqrt{(n-3)^2+1}-\sqrt{(n-2)^2+1})-2\sqrt{8}+2\sqrt{17}+\sqrt{20}\\
			&&+\sqrt{(n-3)^2+4}+\sqrt{(n-3)^2+16}-2\sqrt{(n-2)^2+4}-\sqrt{(n-2)^2+1}.~~~~~(*)
	\end{eqnarray*}	}
	Let $f_1(n)=\sqrt{(n-3)^2+4}+\sqrt{(n-3)^2+16}-2\sqrt{(n-2)^2+4}-\sqrt{(n-2)^2+1}$. Then
	$$\frac{df_1(n)}{dn}=\frac{1} {\sqrt{1+(\frac{2}{n-3})^2}}+\frac{1} {\sqrt{1+(\frac{4}{n-3})^2}}-\frac{2} {\sqrt{1+(\frac{2}{n-2})^2}}-\frac{1} {\sqrt{1+\frac{1}{(n-2)^2}}}<0,$$ which implies $f_1(n)\le \sqrt{20}+\sqrt{32}-2\sqrt{29}-\sqrt{26}$ as $n\ge 7$. Thus, by ($*$) and Lemma \ref{H},
	\begin{eqnarray*}
		~~~~ &&(n-5)\sqrt{(n-3)^2+1}+\sqrt{(n-3)^2+4}+\sqrt{(n-3)^2+16}+\sqrt{20}+2\sqrt{17}\\
		&=&\phi^*(n,2)-(n-5)g^{(1)}(1,n-2)+f_1(n)-2\sqrt{8}+2\sqrt{17}+\sqrt{20}\\
		&\le &\phi^*(n,2)+4\sqrt{17}+2\sqrt{20}+\sqrt{32}-3\sqrt{26}-2\sqrt{8}-2\sqrt{29}\\
		&=&\phi^*(n,2)-0.6307<\phi^*(n,2).
	\end{eqnarray*}
	
	(ii) Let $f_2(n):=\sqrt{(n-3)^2+9}+\sqrt{(n-3)^2+16}-2\sqrt{(n-2)^2+9}-\sqrt{(n-2)^2+4}$. Then $\frac{df_2(n)}{dn}=\frac{1} {\sqrt{1+(\frac{3}{n-3})^2}}+\frac{1} {\sqrt{1+(\frac{4}{n-3})^2}}-\frac{2} {\sqrt{1+(\frac{3}{n-2})^2}}-\frac{1} {\sqrt{1+\frac{2}{(n-2)^2}}}<0$,  and thus, $f_2(n)\le f_2(7)=5+\sqrt{32}-2\sqrt{34}-\sqrt{29}$ as $n\ge 7$. By Lemma \ref{H},
	{\small\begin{eqnarray*}
			&&(n-5)\sqrt{(n-3)^2+1}+\sqrt{(n-3)^2+9}+\sqrt{(n-3)^2+16}+\sqrt{20}+\sqrt{17}+\sqrt{13}+5\\
			&=&\phi''(n,3)-(n-5)(\sqrt{(n-2)^2+1}-\sqrt{(n-3)^2+1})+\sqrt{20}+\sqrt{17}+5\\
			&&+\sqrt{(n-3)^2+9}-2\sqrt{(n-2)^2+9}+\sqrt{(n-3)^2+16}-\sqrt{(n-2)^2+4}-\sqrt{10}-\sqrt{18}\\
			&=&\phi''(n,3)-(n-5)g^{(1)}(1,n-2)+f_2(n)+\sqrt{17}+\sqrt{20}+5-\sqrt{10}-\sqrt{18}\\
			&\le &\phi''(n,3)-2g^{(1)}(1,5)+10+\sqrt{32}-2\sqrt{34}-\sqrt{29}+\sqrt{17}+\sqrt{20}-\sqrt{10}-\sqrt{18}\\
			&=&\phi''(n,3)+10+\sqrt{32}+\sqrt{20}+3\sqrt{17}-2\sqrt{34}-\sqrt{29}-2\sqrt{26}-\sqrt{18}-\sqrt{10}\\
			&=&\phi''(n,3)-2.1517<\phi''(n,3).
	\end{eqnarray*}	}
	Therefore, the proof is complete.
\end{proof}

\begin{thm}\label{max3-2}
	Let $G \in \mathscr{Q}(n,k)\setminus \{Q_{n,k},Q'_{n,k}\}$ with $k\in\{1,2,3,n-1\}$ and $n\ge 7$. Then $$~~~~~~~~~~~~~~~~~~SO(G)\leq\left\{
	\begin{array}{ll}
		\phi''(n,k),      & {if~ k=1,3,n-1};~~~~~~~~~~~~~~~~~~~~~~~~~~~~~~(4)\\
		\phi^*(n,k),     & {if~ k=2}.~~~~~~~~~~~~~~~~~~~~~~~~~~~~~~~~~~~~~~~~~(5)	
	\end{array} \right. $$
	Moreover, equality holds in (4) (or resp. (5)) if and only if $G\cong Q''_{n,k}$ for $k\in\{1,3,n-1\}$ (or resp. $G\cong Q^*_{n,k}$ for $k=2$).
\end{thm}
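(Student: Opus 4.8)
The plan is to run the same extremal-graph argument used for Theorems~\ref{max} and~\ref{max2}, but now inside $\mathscr{Q}(n,k)\setminus\{Q_{n,k},Q'_{n,k}\}$. First I would check directly that $SO(Q''_{n,k})=\phi''(n,k)$ for $k\in\{1,3,n-1\}$ and $SO(Q^*_{n,2})=\phi^*(n,2)$; this is only a matter of reading off degree sequences. For the upper bound, choose $G\in\mathscr{Q}(n,k)\setminus\{Q_{n,k},Q'_{n,k}\}$ with $SO(G)$ maximum; by Lemma~\ref{G'>G} relabel the vertices so that $d_G(v_i)\ge d_G(v_j)$ implies $v_i\succ^G v_j$; put $T:=G-v_n$; and choose $v_1\in V(T)$ of maximum degree in $T$, subject to the maximality of $SO(G)$. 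The argument then splits on whether $T$ is a star.

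\textbf{Case 1: $T\cong S_{n-1}$.} As in Case~1 of the proof of Theorem~\ref{max2}, $v_1v_n\in E(G)$ would force $G\cong Q_{n,k}$, which is excluded, so $G\cong Q^*_{n,k}$. This is impossible for $k=1$ (then $Q^*_{n,1}\cong Q'_{n,1}$ is excluded) and for $k=n-1$ (it would again give $G\cong Q_{n,n-1}$). For $k=2$ it yields $SO(G)=\phi^*(n,2)$, which is the value in~(5). For $k=3$ it yields $SO(G)=\phi^*(n,3)$, and a short direct estimate ($\phi''(n,3)-\phi^*(n,3)=2\bigl(\sqrt{(n-2)^2+9}-\sqrt{(n-2)^2+4}\bigr)+\sqrt{10}+\sqrt{18}-2\sqrt{13}>0$) shows this is dominated by $Q''_{n,3}$.

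\textbf{Case 2: $T\not\cong S_{n-1}$.} Here I would first reprove $v_1v_n\in E(G)$: if $v_nv_i\in E(G)$ for some $i\ne1$, then $v_1\succ^G v_i$ and $G-v_nv_i+v_nv_1$ stays in $\mathscr{Q}(n,k)\setminus\{Q_{n,k},Q'_{n,k}\}$ while having larger Sombor index by Lemma~\ref{G*>G}. Then, exactly as in Fact~1 of the proof of Theorem~\ref{max2}, switching along a longest path from $v_1$ in $T$ (Lemma~\ref{G*>G}) gives $d_T(v_1,v_i)\le 2$ for all $i$; the one new feature is that the step which previously collapsed the set of distance-$2$ vertices to a single vertex can now be obstructed only because $Q'_{n,k}$ is excluded, so the shapes that survive are $T\cong S'_{n-1}$ and $T\cong S''_{n-1}$. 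With $T$ fixed, running the analogues of Facts~2 and~3 --- edge-switchings of $v_n$ onto the higher-degree vertices of $T$, with each resulting sum of $g^{(1)}$-differences signed by Lemma~\ref{H} and, where a $\sigma$-monotonicity is needed, by Lemma~\ref{x,y-r} --- determines which vertices of $T$ are joined to $v_n$. The result is $G\cong Q''_{n,k}$ for $k\in\{1,3,n-1\}$, giving equality in~(4); for $k=3$ the remaining (near-tied) competitor is exactly the graph whose Sombor index appears on the left of Proposition~\ref{2}(ii), which is therefore strictly below $\phi''(n,3)$. For $k=2$ the outcome of Case~2 is the graph whose Sombor index is the left-hand side of Proposition~\ref{2}(i), hence $<\phi^*(n,2)$, so the graph $Q^*_{n,2}$ from Case~1 wins and~(5) follows. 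As in the earlier proofs, the few small values of $n$ (near $n=7$) and the boundary values of $k$ are checked separately.

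The step I expect to be the main obstacle is the bookkeeping in Case~2. Once $Q'_{n,k}$ is forbidden there are several possible positions for a pendant edge or a small subtree inside $T$ --- which is precisely why both $S'_{n-1}$ and $S''_{n-1}$ have to be carried along --- and for each configuration one must produce the correct edge-switch and then verify the sign of a sum of $g^{(1)}$-differences. The genuinely delicate inequalities, where two candidate graphs are nearly tied, are exactly those isolated in Proposition~\ref{2}, so the hard arithmetic is already done; the challenge is organizational: ensuring that every configuration reduces either to a manifestly favourable $g^{(1)}$-difference or to one of those comparisons, and that no case in the range $k\in\{1,2,3,n-1\}$, $n\ge 7$ is missed.
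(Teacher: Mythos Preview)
Your overall structure matches the paper's, but there is a real gap in your Case~2. You assert that when $T\not\cong S_{n-1}$ one can always force $v_1v_n\in E(G)$ by the switch $G-v_nv_i+v_nv_1$, claiming the result ``stays in $\mathscr{Q}(n,k)\setminus\{Q_{n,k},Q'_{n,k}\}$''. This is false: when $T\cong S'_{n-1}$, that switch can land exactly on $Q'_{n,k}$ (for instance, with $k=3$, if $v_n$ is adjacent to $v_2,v_3$ and one further leaf $v_4$, then replacing $v_nv_4$ by $v_nv_1$ produces $Q'_{n,3}$). Consequently the paper does \emph{not} reduce to $v_1v_n\in E(G)$ here; it treats $T\cong S'_{n-1}$ as a separate case and splits it into the subcases $v_nv_1\in E(G)$ and $v_nv_1\notin E(G)$, and within the first subcase further into $v_nv_2\in E(G)$ versus $v_nv_2\notin E(G)$.

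Each of these subcases yields a distinct candidate graph whose Sombor index must be compared to $\phi''(n,3)$ or $\phi^*(n,2)$, and those comparisons are \emph{not} the ones in Proposition~\ref{2}. For example, for $k=3$ the paper must separately bound the graphs with $SO=(n-5)\sqrt{(n-2)^2+1}+2\sqrt{(n-2)^2+4}+\sqrt{(n-2)^2+9}+2\sqrt{13}+\sqrt{8}$ (Subcase~2.1, $v_2v_n\notin E(G)$) and $SO=(n-5)\sqrt{(n-3)^2+1}+\sqrt{(n-3)^2+9}+\sqrt{(n-3)^2+4}+3\sqrt{13}+\sqrt{18}$ (Subcase~2.2), neither of which is the left side of Proposition~\ref{2}(ii); that proposition handles only the $T\cong S''_{n-1}$ competitor. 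So your claim that ``the remaining (near-tied) competitor is exactly the graph whose Sombor index appears on the left of Proposition~\ref{2}'' understates the case analysis: there are two or three further competitors per $k\in\{2,3\}$, each needing its own short $g^{(1)}$-estimate. The fix is to drop the unjustified $v_1v_n\in E(G)$ step when $T\cong S'_{n-1}$ and carry out those extra direct comparisons.
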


\begin{proof} It suffices to show that if $G \in \mathscr{Q}(n,k)\setminus \{Q_{n,k},Q'_{n,k}\}$, then (4) (or resp. (5)) holds and  equality in (4) (or resp. (5)) holds only if $G\cong Q''_{n,k}$  for $k\in\{1,3,n-1\}$ (or resp. $G\cong Q^*_{n,k}$ for $k=2$). Choose $G \in \mathscr{Q}(n,k)\setminus \{Q_{n,k},Q'_{n,k}\}$ such that
	
	(E-1)~~ $SO(G)$ is as large as possible.
	
	Let $V(G)=\{v_1,\ldots,v_{n-1},v_n\}$, and let $T:=G-v_n$ a tree with $d_G(v_n)=k$. Choose $v_1\in V(T)$ such that	
	
	(E-2)~~subject to (E-1), $d_G(v_1)$ is as large as possible.
	
	Note that if $d_G(v_i)\ge d_G(v_j)$, then we can assume that $v_i\succ^G v_j$ by Lemma \ref{G'>G} and (E-1).
	We consider three cases.
	
	\vskip .2cm	
	
	{\bf Case 1.} $T\cong S_{n-1}$.
	
	\vskip .2cm
	
	In this case, $v_1$ is the $(n-2)$-vertex, $2\le k\le n-2$ and $G\cong Q^*_{n,k}$ as $G \not\cong Q_{n,k}, Q'_{n,k}$. So the assertion holds for $k=2$. If $k=3$, then
	\begin{eqnarray*}
		SO(G)
		&=&\phi^*(n,3)=3\sqrt{(n-2)^2+4}+3\sqrt{13}+(n-5)\sqrt{(n-2)^2+1}\\
		&=&\phi''(n,3)+2\sqrt{(n-2)^2+4}-2\sqrt{(n-2)^2+9}+2\sqrt{13}-\sqrt{18}-\sqrt{10}\\
		&<&\phi''(n,3).
	\end{eqnarray*} So the assertion holds.
	
	\vskip .2cm	
	
	{\bf Case 2.} $T\cong S'_{n-1}$.
	
	\vskip .2cm
	
	In this case, $v_1$ is the $(n-3)$-vertex in $T$. Let $v_2$ be the $2$-vertex in $T$ and $v_3$ the $1$-vertex adjacent to $v_2$ in $T$, then $k\not= n-1$ as $G \not\cong Q'_{n,k}$. We consider two subcases.

	\vskip .2cm	
	
	{\bf Subcase 2.1} $v_nv_1\in E(G)$.
	
	\vskip .2cm
	
	In this subcase, $k\ge 2$ as $G \not\cong Q'_{n,k}$. If $v_2v_n\in E(G)$, then $k\ge 3$ and $v_3v_n\notin E(G)$ as $G \not\cong Q'_{n,k}$, which implies $G\cong Q''_{n,3}$, the assertion holds.
	
	If $v_2v_n\notin E(G)$, then $k\neq n-1$. By an argument similar to the proof of Fact 3, we have $v_nv_3\in E(G)$. If $k=2$, then $G\cong Q^*_{n,2}$, the assertion holds. If $k=3$, then there exists some $v_4\in V(T)\setminus\{v_1,v_2,v_3\}$ such that $v_nv_4\in E(G)$, and then
	\begin{eqnarray*}
		SO(G)
		&=&(n-5)\sqrt{(n-2)^2+1}+2\sqrt{(n-2)^2+4}+\sqrt{(n-2)^2+9}+2\sqrt{13}+\sqrt{8}\\
		&=&\phi''(n,3)+\sqrt{(n-2)^2+4}+\sqrt{13}+\sqrt{8}-\sqrt{(n-2)^2+9}-\sqrt{10}-\sqrt{18}\\
		&<&\phi''(n,3).
	\end{eqnarray*}
	
	\vskip .2cm	
	
	{\bf Subcase 2.2} $v_nv_1\notin E(G)$.
	
	\vskip .2cm
	
	In this subcase, $k\neq n-1$ and by an argument similar to the proof of Fact 2, we have $v_nv_2\in E(G)$. If $k=1$, then $G\cong S_n''$, the assertion holds for $k=1$. So we can assume that $k\ge 2$. Then by an argument similar to the proof of Fact 3, we have $v_nv_3\in E(G)$. If $k=2$, then
	\begin{eqnarray*}
		SO(G)
		&=&(n-4)\sqrt{(n-3)^2+1}+\sqrt{(n-3)^2+9}+2\sqrt{13}+\sqrt{8}\\
		&=&\phi^*(n,2)+(n-4)(\sqrt{(n-3)^2+1}-\sqrt{(n-2)^2+1})\\
		&&+\sqrt{(n-3)^2+9}-2\sqrt{(n-2)^2+4}+2\sqrt{13}-\sqrt{8}\\
		&<&\phi^*(n,2)+2\sqrt{13}-\sqrt{(n-2)^2+4}-\sqrt{8}<\phi^*(n,2).
	\end{eqnarray*}
	If $k=3$, then
	\begin{eqnarray*}
		SO(G)
		&=&(n-5)\sqrt{(n-3)^2+1}+\sqrt{(n-3)^2+9}+\sqrt{(n-3)^2+4}+3\sqrt{13}+\sqrt{18}\\
		&=&\phi''(n,3)+(n-5)(\sqrt{(n-3)^2+1}-\sqrt{(n-2)^2+1})+2\sqrt{13}-\sqrt{10}\\
		&&+\sqrt{(n-3)^2+9}+\sqrt{(n-3)^2+4}-2\sqrt{(n-2)^2+9}-\sqrt{(n-2)^2+4}\\
		&<&\phi''(n,3)+g^{(1)}(3,2)-g^{(1)}(3,n-2)<\phi''(n,3).
	\end{eqnarray*}

	\vskip .2cm	
	
	{\bf Case 3.} $T \not\cong S_{n-1}$ and$T \not\cong S'_{n-1}$.
	
	\vskip .2cm
	
	In this case, by an argument similar to the proof of {Fact 1}, we have $T\cong S''_{n-1}$. Moreover, $v_nv_1\in E(G)$, which implies that the assertion holds for $k=1,n-1$. Let $v_2$ be the $3$-vertex in $T$, and let $v_3$ one $1$-vertex adjacent to $v_2$ in $T$. By an argument similar to the proofs of {Facts 2 and 3}, we have the following:
	
	\vskip .2cm
	
	{\bf Fact 4.} {\em (i)  If $k\ge 2$, then $v_nv_2\in E(G)$;
		
		(ii) If $k\ge 3$, then $v_nv_3\in E(G)$.}
	
	\vskip .2cm
	
	If $k=2$, then by Fact 4(i) and Proposition \ref{2} (i), \begin{eqnarray*}SO(G)&=&(n-5)\sqrt{(n-3)^2+1}+\sqrt{(n-3)^2+4}+\sqrt{(n-3)^2+16}+\sqrt{20}+2\sqrt{17}\\&<& \phi^*(n,2);\end{eqnarray*} and if $k=3$, then by Fact 4(ii) and Proposition \ref{2}(ii), $SO(G)=(n-5)\sqrt{(n-3)^2+1}+\sqrt{(n-3)^2+9}+\sqrt{(n-3)^2+16}+\sqrt{20}+\sqrt{17}+\sqrt{13}+5<\phi''(n,3)$.
	
	Therefore the proof of Theorem \ref{max3-2} is complete.
\end{proof}

Recall that $\mathscr{Q}(n,1)$ is the set of all trees of order $n$. By Theorems \ref{max}-\ref{max3-2}, we have the following results (see Table 1).

\begin{cor}
	Let $T$ be a tree of order $n$.
	
	(i) $SO(T)\le (n-1)\sqrt{(n-1)^2+1}$ with equality if and only if $T\cong S_n$ (see \cite{IGA8});
	
	(ii) If $T\not\cong S_n$, then $SO(T)\le (n-3)\sqrt{(n-2)^2+1}+\sqrt{(n-2)^2+4}+\sqrt{5}$ with equality if and only if $T\cong S'_n$ (see \cite{KCIG3}), where $n\ge 4$;
	
	(iii) If $T\not\cong S_n,S_n'$, then $SO(T)\le (n-4)\sqrt{(n-3)^2+1}+\sqrt{(n-3)^2+9}+2\sqrt{10}$ with equality if and only if $T\cong S''_n$, where $n\ge 6$.
\end{cor}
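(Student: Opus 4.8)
The plan is to obtain all three parts by specializing Theorems \ref{max}, \ref{max2} and \ref{max3-2} to the case $k=1$, using the observation (recorded in the Introduction) that $\mathscr{Q}(n,1)$ is exactly the set of all trees of order $n$: indeed, if $G-u$ is a tree and $d_G(u)=1$ then $G$ is a tree, and conversely any tree $T$ of order $n$ satisfies $T-u$ being a tree with $d_T(u)=1$ for any leaf $u$. Thus every tree of order $n$ is admissible in each of the three theorems with $k=1$, and $k=1$ lies in the first branch (``$k=1,2,3,n-1$'') of both Theorem \ref{max2} and Theorem \ref{max3-2}. The only remaining tasks are to simplify the three constants $\phi(n,1)$, $\phi'(n,1)$, $\phi''(n,1)$ and to identify the extremal quasi-trees with $S_n$, $S'_n$, $S''_n$ — but these identifications $Q_{n,1}\cong S_n$, $Q'_{n,1}\cong S'_n$, $Q''_{n,1}\cong S''_n$ are immediate from the definitions of those graphs given before Figure 1.

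First I would treat part (i): for any tree $T$ of order $n$, Theorem \ref{max} with $k=1$ gives $SO(T)\le\phi(n,1)$, and a one-line computation yields $\phi(n,1)=(n-2)\sqrt{(n-1)^2+1}+\sqrt{(n-1)^2+1}=(n-1)\sqrt{(n-1)^2+1}$, with equality iff $T\cong Q_{n,1}\cong S_n$. Next, for part (ii), if $T\not\cong S_n$ then $T\in\mathscr{Q}(n,1)\setminus\{Q_{n,1}\}$, so Theorem \ref{max2} (branch $(2)$, $k=1$) gives $SO(T)\le\phi'(n,1)=(n-3)\sqrt{(n-2)^2+1}+\sqrt{(n-2)^2+4}+\sqrt5$, with equality iff $T\cong Q'_{n,1}\cong S'_n$; this covers $n\ge5$. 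Finally, for part (iii), if $T\not\cong S_n,S'_n$ then $T\in\mathscr{Q}(n,1)\setminus\{Q_{n,1},Q'_{n,1}\}$, and Theorem \ref{max3-2} (branch $(4)$, $k=1$) gives $SO(T)\le\phi''(n,1)=(n-4)\sqrt{(n-3)^2+1}+\sqrt{(n-3)^2+9}+2\sqrt{10}$, with equality iff $T\cong Q''_{n,1}\cong S''_n$; this covers $n\ge7$.

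The one genuinely separate piece of work — and the only real obstacle — is the two boundary orders not covered by the stated ranges of the cited theorems, namely $n=4$ in part (ii) and $n=6$ in part (iii); I would dispose of them by direct enumeration. For $n=4$ the only tree other than $S_4$ is the path $P_4$, which coincides with $S'_4$, and $SO(P_4)=2\sqrt5+2\sqrt2=\phi'(4,1)$. For $n=6$ the trees different from $S_6$ and $S'_6$ are $P_6$, the balanced double star (which is $S''_6$), and the two spiders with degree sequence $(3,2,2,1,1,1)$; a short comparison of their Sombor indices shows the double star is the unique maximizer, with value $4\sqrt{10}+3\sqrt2=\phi''(6,1)$. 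Apart from this finite check, the Corollary is pure bookkeeping on top of Theorems \ref{max}--\ref{max3-2}.
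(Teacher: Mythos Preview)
Your proposal is correct and follows exactly the paper's own route: the paper simply writes ``Recall that $\mathscr{Q}(n,1)$ is the set of all trees of order $n$. By Theorems \ref{max}--\ref{max3-2}, we have the following results,'' and gives no further argument. Your identifications $Q_{n,1}\cong S_n$, $Q'_{n,1}\cong S'_n$, $Q''_{n,1}\cong S''_n$ and the evaluations of $\phi(n,1)$, $\phi'(n,1)$, $\phi''(n,1)$ are precisely what is implicit there.

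In fact you are slightly more careful than the paper: Theorem \ref{max2} is stated only for $n\ge 5$ and Theorem \ref{max3-2} only for $n\ge 7$, whereas the Corollary claims (ii) for $n\ge 4$ and (iii) for $n\ge 6$. The paper does not comment on these boundary orders, while you close the gap by the direct checks at $n=4$ (where $P_4\cong S'_4$) and $n=6$ (comparing $S''_6$ against $P_6$ and the two spiders with degree sequence $(3,2,2,1,1,1)$). That enumeration is accurate, so your version is a genuine (if minor) improvement over the paper's one-line justification.
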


Let $\mathscr{U}_n$ be the set of all unicyclic graphs of order $n$. Then $\mathscr{U}_n\supset \mathscr{Q}(n,2)$.
Moreover, if $G\in  \mathscr{U}_n$ with the maximum $SO(G)$, then by Lemmas \ref{G'>G} and \ref{G*>G}, there is a $2$-vertex in the unique cycle of $G$, which implies $G\in \mathscr{Q}(n,2)$. Hence, by Theorems \ref{max}-\ref{max3-2}, we have the following results (see Table 1).

\begin{cor}
	Let $G$ be a unicyclic graph of order $n$ with $n\ge 3$. Then
	
	(i) $SO(G)\le \sqrt{8}+(n-3)\sqrt{(n-1)^2+1}+2\sqrt{(n-1)^2+4}$ with equality if and only if $G\cong Q_{n,2}$ (see \cite{RCJR1});
	
	(ii) If $G\not\cong Q_{n,2}$, then $SO(G)\le \sqrt{10}+\sqrt{13}+(n-4)\sqrt{(n-2)^2+1}+\sqrt{(n-2)^2+4}+\sqrt{(n-2)^2+9}$ with equality if and only if $T\cong Q'_{n,2}$, where $n\ge 5$;
	
	(iii) If $G\not\cong Q_{n,2},Q'_{n,2}$, then $SO(G)\le (n-4)\sqrt{(n-2)^2+1}+2\sqrt{(n-2)^2+4}+2\sqrt{8}$ with equality if and only if $T\cong Q^*_{n,2}$, where $n\ge 5$.
\end{cor}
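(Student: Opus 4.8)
The plan is to reduce the optimisation over all unicyclic graphs of order $n$ to the one over $\mathscr{Q}(n,2)$, which Theorems \ref{max}, \ref{max2} and \ref{max3-2} already settle (take $k=2$). First I would record that $\mathscr{Q}(n,2)\subseteq\mathscr{U}_n$: if $G\in\mathscr{Q}(n,2)$ with $G-u$ a tree and $d_G(u)=2$, then $G-u$ has $n-1$ vertices and $n-2$ edges, so $|E(G)|=n=|V(G)|$, and since $G$ is connected it is unicyclic. Conversely a unicyclic graph lies in $\mathscr{Q}(n,2)$ exactly when its unique cycle has a vertex of degree $2$ (deleting it leaves a connected acyclic graph), and some unicyclic graphs fail this, e.g. a triangle with one pendant vertex attached at each of its vertices; hence the inclusion is strict.

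The heart of the argument is to show that every unicyclic graph $G$ of order $n$ whose cycle has no vertex of degree $2$ satisfies $SO(G)<\phi^*(n,2)$. Granting this, the three largest Sombor indices over $\mathscr{U}_n$ are attained inside $\mathscr{Q}(n,2)$, because $Q_{n,2},Q'_{n,2},Q^*_{n,2}\in\mathscr{Q}(n,2)$ and $\phi(n,2)>\phi'(n,2)>\phi^*(n,2)$ (the first inequality from Theorem \ref{max}, the second from Proposition \ref{1}(i)); the corollary then follows from the three quasi-tree theorems with $k=2$. To establish the displayed inequality I would argue as in the proofs of Theorems \ref{max}--\ref{max3-2}: use Lemma \ref{G'>G} to arrange that the degree order induces $\succ^G$, then repeatedly relocate a pendant edge from a lower-degree cycle vertex $z$ onto a maximum-degree cycle vertex $x$ with $x\succ^G z$ (Lemma \ref{G*>G}), which keeps every cycle vertex of degree $\ge 3$ and strictly increases $SO$, until the $SO$-maximal unicyclic graph of this type is reached; for that single graph one checks $SO<\phi^*(n,2)$ by a direct estimate in the spirit of Proposition \ref{2}. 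The step I expect to be the main obstacle is exactly this programme — verifying that the edge moves preserve connectedness and unicyclicity, that Lemmas \ref{G'>G}--\ref{G*>G} give strict gains for the chosen pairs, pinning down the extremal graph (in particular that its cycle is a triangle), and pushing through the final comparison with $\phi^*(n,2)$, with the few small orders (e.g. $n\in\{5,6\}$) done by hand.

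Finally I would substitute $k=2$ into the three theorems. Theorem \ref{max} gives $SO(G)\le\phi(n,2)=\sqrt{8}+(n-3)\sqrt{(n-1)^2+1}+2\sqrt{(n-1)^2+4}$ with equality iff $G\cong Q_{n,2}$, which is (i); Theorem \ref{max2} gives, for $G\not\cong Q_{n,2}$, $SO(G)\le\phi'(n,2)=\sqrt{10}+\sqrt{13}+(n-4)\sqrt{(n-2)^2+1}+\sqrt{(n-2)^2+4}+\sqrt{(n-2)^2+9}$ with equality iff $G\cong Q'_{n,2}$, which is (ii); and Theorem \ref{max3-2} with $k=2$ gives, for $G\not\cong Q_{n,2},Q'_{n,2}$, $SO(G)\le\phi^*(n,2)=(n-4)\sqrt{(n-2)^2+1}+2\sqrt{(n-2)^2+4}+2\sqrt{8}$ with equality iff $G\cong Q^*_{n,2}$, which is (iii). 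I would also keep track of the order restrictions: $n\ge5$ covers (i) and (ii), while in (iii) the orders $n\in\{5,6\}$ lie below the $n\ge7$ threshold of Theorem \ref{max3-2} and would be verified directly.
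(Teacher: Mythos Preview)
Your reduction is the paper's: show that an $SO$-extremal unicyclic graph must lie in $\mathscr{Q}(n,2)$ and then read off Theorems~\ref{max}, \ref{max2}, \ref{max3-2} at $k=2$. The paper is actually terser than you --- it asserts in one sentence, via Lemmas~\ref{G'>G}--\ref{G*>G}, that the $SO$-maximal $G\in\mathscr{U}_n$ has a $2$-vertex on its cycle, and then invokes the theorems --- whereas you correctly isolate the stronger claim $SO(G)<\phi^*(n,2)$ for $G\in\mathscr{U}_n\setminus\mathscr{Q}(n,2)$ that is really needed for (ii) and (iii), and flag the small-$n$ gap in (iii).

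One concrete error in your sketch: relocating a pendant edge from a cycle vertex $z$ with $d_G(z)=3$ to a higher-degree cycle vertex $x$ drops $d(z)$ to $2$, so it does \emph{not} ``keep every cycle vertex of degree $\ge 3$''; rather the image $G'$ lands in $\mathscr{Q}(n,2)$. That is in fact helpful --- a single Lemma~\ref{G*>G} step pushes any $G\in\mathscr{U}_n\setminus\mathscr{Q}(n,2)$ into $\mathscr{Q}(n,2)$ with strictly larger $SO$ --- but beware that $G'$ can equal $Q'_{n,2}$ (e.g.\ $G$ a triangle with $n-5$ pendants at one vertex and one pendant at each of the other two, then move one of those lone pendants to the heavy vertex), so the chain only yields $SO(G)<\phi'(n,2)$. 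To reach $\phi^*(n,2)$ for (iii) you still owe a direct estimate for the handful of shapes $G$ whose one-step image is $Q'_{n,2}$ or $Q^*_{n,2}$; the paper does not supply this step either.
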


\begin{table}[!hbp]\label{table1}
	\caption{Somber indices of quasi-tree graphs in $\mathscr{Q}(n,k)$}	
	\centering
	\vskip.2cm
	\begin{tabular}{|c|c|c|c|}
		\hline
		& the maximum & the second maximum & the third maximum\\
		\hline
		$k=1$  & 
		\setlength{\unitlength}{1mm}
		\begin{picture}(25,28)
			\thinlines
			\put(0,15){\circle*{1.5}}
			\put(10,20){\circle*{1.5}}
			\put(10,10){\circle*{1.5}}
			\put(10,13.5){\circle*{0.8}}
			\put(10,15.5){\circle*{0.8}}
			\put(10,17.5){\circle*{0.8}}
			\put(9,14){$~\left\}\rule{0mm}{7mm}\right.$\liuhao{$n-1$} }
			\put(0,15){\line(2,-1){10}}
			\put(0,15){\line(2,1){10}}
			\put(0,2){$S_n$ (see [11])}	
		\end{picture}
		&\setlength{\unitlength}{1mm}
		\begin{picture}(25,25)
			\thinlines
			\put(-2,15){\circle*{1.5}}
			\put(4,15){\circle*{1.5}}
			\put(10,15){\circle*{1.5}}
			\put(16,20){\circle*{1.5}}
			\put(16,10){\circle*{1.5}}
			\put(16,13.5){\circle*{0.8}}
			\put(16,15.5){\circle*{0.8}}
			\put(16,17.5){\circle*{0.8}}
			\put(15,14){$~\left\}\rule{0mm}{7mm}\right.$\liuhao{$n-3$} }
			\put(-2,15){\line(1,0){6}}
			\put(4,15){\line(1,0){6}}
			\put(10,15){\line(1,-0.8){6}}
			\put(10,15){\line(1,0.8){6}}
			\put(2,2){$S'_n$ (see [7])}	
		\end{picture}
		&\setlength{\unitlength}{1mm}
		\begin{picture}(25,25)
			\thinlines
			\put(-2,10){\circle*{1.5}}
			\put(-2,20){\circle*{1.5}}
			\put(4,15){\circle*{1.5}}
			\put(10,15){\circle*{1.5}}
			\put(16,10){\circle*{1.5}}
			\put(16,20){\circle*{1.5}}
			\put(16,13.5){\circle*{0.8}}
			\put(16,15.5){\circle*{0.8}}
			\put(16,17.5){\circle*{0.8}}
			\put(15,14){$~\left\}\rule{0mm}{7mm}\right.$\liuhao{$n-4$} }
			\put(4,15){\line(-1,-0.8){6}}
			\put(4,15){\line(-1,0.8){6}}
			\put(4,15){\line(1,0){6}}
			\put(10,15){\line(1,-0.8){6}}
			\put(10,15){\line(1,0.8){6}}
			\put(6,2){$S''_n$}		
		\end{picture}\\
		\hline
		$k=2$ &\setlength{\unitlength}{1mm}
		\begin{picture}(25,25)
			\thinlines
			\put(0,10){\circle*{1.5}}
			\put(0,20){\circle*{1.5}}
			\put(7,15){\circle*{1.5}}
			\put(14,10){\circle*{1.5}}
			\put(14,20){\circle*{1.5}}
			\put(14,13.5){\circle*{0.8}}
			\put(14,15.5){\circle*{0.8}}
			\put(14,17.5){\circle*{0.8}}
			\put(13,14){$~\left\}\rule{0mm}{7mm}\right.$\liuhao{$n-3$} }
			\put(7,15){\line(-1.5,-1.1){7}}
			\put(7,15){\line(-1.5,1.1){7}}
			\put(7,15){\line(1.5,1.1){7}}
			\put(7,15){\line(1.5,-1.1){7}}
			\put(0,20){\line(0,-1){10}}
			
			\put(0,2){$Q_{n,2}$ (see [4])}	
			
		\end{picture}
		& \setlength{\unitlength}{1mm}
		\begin{picture}(25,25)
			\thinlines
			\put(-2,20){\circle*{1.5}}
			\put(5,10){\circle*{1.5}}
			\put(5,20){\circle*{1.5}}
			\put(11,15){\circle*{1.5}}
			\put(17,10){\circle*{1.5}}
			\put(17,20){\circle*{1.5}}
			\put(17,13.5){\circle*{0.8}}
			\put(17,15.5){\circle*{0.8}}
			\put(17,17.5){\circle*{0.8}}
			\put(16,14){$~\left\}\rule{0mm}{7mm}\right.$\liuhao{$n-4$} }
			\put(11,15){\line(-1.4,-1.1){6}}
			\put(11,15){\line(-1.4,1.1){6}}
			\put(11,15){\line(1.4,1.1){6}}
			\put(11,15){\line(1.4,-1.1){6}}
			\put(5,20){\line(0,-1){10}}
			\put(5,20){\line(-1,0){7}}\put(6,2){$Q'_{n,2}$}
		\end{picture}  &  \setlength{\unitlength}{1mm}
		\begin{picture}(25,28)
			\thinlines
			\put(-2,15){\circle*{1.5}}
			\put(4,10){\circle*{1.5}}
			\put(4,20){\circle*{1.5}}
			\put(10,15){\circle*{1.5}}
			\put(16,10){\circle*{1.5}}
			\put(16,20){\circle*{1.5}}
			\put(16,13.5){\circle*{0.8}}
			\put(16,15.5){\circle*{0.8}}
			\put(16,17.5){\circle*{0.8}}
			\put(15,14){$~\left\}\rule{0mm}{7mm}\right.$\liuhao{$n-4$} }
			\put(10,15){\line(-1.4,-1.1){6}}
			\put(10,15){\line(-1.4,1.1){6}}
			\put(10,15){\line(1.4,1.1){6}}
			\put(10,15){\line(1.4,-1.1){6}}
			\put(4,20){\line(-1.4,-1.1){6}}
			\put(4,10){\line(-1.4,1.1){6}}
			
			\put(6,2){$Q^*_{n,2}$}
		\end{picture} \\
		\hline
		$k=3$ &\setlength{\unitlength}{1mm}
		\begin{picture}(27,28)
			\thinlines
			\put(-2,15){\circle*{1.5}}
			\put(4,20){\circle*{1.5}}
			\put(4,10){\circle*{1.5}}
			\put(10,15){\circle*{1.5}}
			\put(16,20){\circle*{1.5}}
			\put(16,10){\circle*{1.5}}
			\put(16,13.5){\circle*{0.8}}
			\put(16,15.5){\circle*{0.8}}
			\put(16,17.5){\circle*{0.8}}
			\put(15,14){$~\left\}\rule{0mm}{7mm}\right.$\liuhao{$n-4$}}
			\put(10,15){\line(-1.4,-1.1){6}}
			\put(10,15){\line(-1.4,1.1){6}}
			\put(10,15){\line(1.4,1.1){6}}
			\put(10,15){\line(1.4,-1.1){6}}
			\put(4,20){\line(-1.4,-1.1){6}}
			\put(4,10){\line(-1.4,1.1){6}}
			\put(-2,15){\line(1,0){12}}

			\put(5,2){$Q_{n,3}$}
			
		\end{picture}   & \setlength{\unitlength}{1mm}
		\begin{picture}(25,25)
			\thinlines
			\put(-2,15){\circle*{1.5}}
			\put(4,20){\circle*{1.5}}
			\put(4,10){\circle*{1.5}}
			\put(10,15){\circle*{1.5}}
			\put(16,20){\circle*{1.5}}
			\put(16,10){\circle*{1.5}}
			\put(16,13.5){\circle*{0.8}}
			\put(16,15.5){\circle*{0.8}}
			\put(16,17.5){\circle*{0.8}}
			\put(15,14){$~\left\}\rule{0mm}{7mm}\right.$\liuhao{$n-4$}}
			\put(10,15){\line(-1.4,-1.1){6}}
			\put(10,15){\line(-1.4,1.1){6}}
			\put(10,15){\line(1.4,1.1){6}}
			\put(10,15){\line(1.4,-1.1){6}}
			\put(4,20){\line(-1.4,-1.1){6}}
			\put(4,10){\line(-1.4,1.1){6}}
			\put(4,20){\line(0,-1){10}}			
			\put(6,2){$Q'_{n,3}$}
		\end{picture}& \setlength{\unitlength}{1mm}
		\begin{picture}(28,25)
			\thinlines
			\put(-1,20){\circle*{1.5}}
			\put(0,15){\circle*{1.5}}
			\put(6,20){\circle*{1.5}}
			\put(6,10){\circle*{1.5}}
			\put(12.5,15){\circle*{1.5}}
			\put(18,20){\circle*{1.5}}
			\put(18,10){\circle*{1.5}}
			\put(18,13.5){\circle*{0.8}}
			\put(18,15.5){\circle*{0.8}}
			\put(18,17.5){\circle*{0.8}}
			\put(17,14){$~\left\}\rule{0mm}{7mm}\right.$\liuhao{$n-5$}}
			\put(12,15){\line(-1.5,-1.2){6}}
			\put(12,15){\line(-1.5,1.2){6}}
			\put(12,15){\line(1.5,1.2){6}}
			\put(12,15){\line(1.5,-1.2){6}}
			\put(6,20){\line(-1.5,-1.2){6}}
			\put(6,10){\line(-1.5,1.2){6}}
			\put(0,15){\line(1,0){12}}
			\put(6,20){\line(-1,0){7}}

			\put(8,2){$Q''_{n,3}$}
		\end{picture}\\
		\hline 	 $4\le k\le n-2$
		&$Q_{n,k}$ ($n\ge 23$)&$Q^*_{n,k}$ ($n\ge 23$)&$Q'_{n,k}$ ($n\ge 23$)\\
		\hline $k=n-1$ &$Q_{n,n-1}$ (see [7])&$Q'_{n,n-1}$&$Q''_{n,n-1}$\\
		\hline 	
	\end{tabular}
	
\end{table}

Note that if we add an edge $e$ to a connected graph $G$, then $SO(G+e) >SO(G)$. So we have the following:

\begin{lem}\label{-1}
	$SO(Q_{n,k+1})>SO(Q_{n,k})$ for $1\le k\le n-2$.
\end{lem}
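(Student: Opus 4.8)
The plan is to recognize $Q_{n,k+1}$ as being obtained from $Q_{n,k}$ by inserting a single edge, and then to invoke the observation recorded immediately before the lemma: adding an edge to a connected graph strictly increases the Sombor index. So the whole proof reduces to an identification of graphs, with no inequality manipulation at all.

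First I would write down the degree structure of $Q_{n,k}$. By construction it has one vertex $v_1$ of degree $n-1$ (the centre of $S_{n-1}$ together with its edge to $v$), the vertex $v$ of degree $k$, exactly $k-1$ vertices of degree $2$ (the $1$-vertices of $S_{n-1}$ that were joined to $v$), and $n-k-1$ vertices of degree $1$ (the remaining leaves of $S_{n-1}$, adjacent only to $v_1$). Since $1\le k\le n-2$ we have $n-k-1\ge 1$, so we may pick such a $1$-vertex $w$; note $w$ is a leaf of the star and is not adjacent to $v$. Then I would check that $Q_{n,k}+vw\cong Q_{n,k+1}$: adding the edge $vw$ leaves $S_{n-1}$ and its centre-to-$v$ edge untouched, and it makes $v$ a vertex of degree $k+1$ joined to the $(n-2)$-vertex of $S_{n-1}$ and to $k$ of its $1$-vertices, which is precisely the recipe defining $Q_{n,k+1}$. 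The one thing to be careful about is that the new edge lands on a leaf of the star (so that $v$'s new neighbour is a $1$-vertex of $S_{n-1}$, hence the result is $Q_{n,k+1}$ and not, say, $Q^*_{n,k+1}$); this is automatic since $w$ was chosen to be a leaf of $S_{n-1}$.

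With this identification in hand, the conclusion is immediate: $Q_{n,k}$ is connected, so by the observation preceding the lemma $SO(Q_{n,k+1})=SO(Q_{n,k}+vw)>SO(Q_{n,k})$. I do not expect any genuine obstacle; the only checkpoint is confirming that a ``spare'' $1$-vertex exists exactly in the range $k\le n-2$, which is where the hypothesis is used. If one wanted a self-contained computational proof instead, one could compare $\phi(n,k+1)$ with $\phi(n,k)$ term by term and show the difference is positive using the monotonicity of $g^{(r)}$ from Lemma \ref{H}, but that requires tracking how the counts $n-k-1$, $k-1$ and the weight $\sqrt{(n-1)^2+j^2}$ all shift, so the edge-addition argument is decidedly the cleaner route.
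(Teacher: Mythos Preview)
Your proposal is correct and matches the paper's own argument exactly: the paper simply notes that adding an edge to a connected graph strictly increases the Sombor index, and the lemma is stated as an immediate consequence, with $Q_{n,k+1}$ implicitly recognized as $Q_{n,k}$ plus one edge from $v$ to a remaining leaf of the star. Your write-up just makes the identification $Q_{n,k}+vw\cong Q_{n,k+1}$ and the role of the hypothesis $k\le n-2$ explicit, which is fine.
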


By Lemma \ref{-1} and Theorems \ref{max}-\ref{max3-1}, we have the following results.

\begin{cor}
	Let $G$ be a quasi-tree graph of order $n\ge 7$. Then
	
	(i) $SO(G)\le \sqrt{2}(n-1)+2(n-2)\sqrt{(n-1)^2+4}$ with equality if and only if $G\cong Q_{n,n-1}$ (see \cite{KCIG3});
	
	(ii) If $G\not\cong Q_{n,n-1}$, then $SO(G)\le (n-3)\sqrt{(n-1)^2+4}+(n-4)\sqrt{(n-2)^2+4}+\sqrt{(n-1)^2+(n-2)^2}+\sqrt{(n-2)^2+9}+\sqrt{(n-1)^2+9}+\sqrt{13}$ with equality if and only if $G\cong Q'_{n,n-1}$;
	
	(iii) If $G\not\cong Q_{n,n-1},Q'_{n,n-1}$, then $SO(G)\le (n-3)\sqrt{(n-1)^2+4}+\sqrt{(n-1)^2+16}+\sqrt{(n-1)^2+(n-3)^2}+\sqrt{(n-3)^2+16}+(n-5)\sqrt{(n-3)^2+4}+2\sqrt{20}$ with equality if and only if $G\cong Q''_{n,n-1}$.
\end{cor}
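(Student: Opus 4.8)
The plan is to reduce everything to the ``fixed $k$'' results already established. Every quasi-tree $G$ of order $n$ lies in $\mathscr Q(n,k)$ for $k=d_G(u)$, where $G-u$ is a tree, so $1\le k\le n-1$; by Theorem~\ref{max}, $SO(G)\le \phi(n,k)=SO(Q_{n,k})$ with equality iff $G\cong Q_{n,k}$, and by Lemma~\ref{-1} the values $SO(Q_{n,1}),SO(Q_{n,2}),\dots,SO(Q_{n,n-1})$ are strictly increasing. Since $\phi(n,n-1)=\sqrt2\,(n-1)+2(n-2)\sqrt{(n-1)^2+4}$, part (i) follows at once: $SO(G)\le \phi(n,n-1)$, and equality forces $k=n-1$ and then $G\cong Q_{n,n-1}$.

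For (ii), assume $G\not\cong Q_{n,n-1}$. If $k\le n-2$ then $SO(G)\le \phi(n,k)\le \phi(n,n-2)$ by Lemma~\ref{-1}; if $k=n-1$ then $G\in\mathscr Q(n,n-1)\setminus\{Q_{n,n-1}\}$, so Theorem~\ref{max2} gives $SO(G)\le \phi'(n,n-1)=SO(Q'_{n,n-1})$ with equality iff $G\cong Q'_{n,n-1}$. It remains to check that $Q'_{n,n-1}$ beats every $Q_{n,k}$ with $k\le n-2$, i.e. $\phi'(n,n-1)>\phi(n,n-2)$; writing out both expressions the bulk terms cancel and one is left with
$$\phi'(n,n-1)-\phi(n,n-2)=\big(\sqrt{(n-2)^2+9}-\sqrt{(n-2)^2+4}\big)+\big(\sqrt{(n-1)^2+9}-\sqrt{(n-1)^2+1}\big)+\sqrt{13}>0,$$
each bracket being positive (e.g. by monotonicity of $g^{(r)}$ in Lemma~\ref{H}). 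Hence in (ii) equality holds only for $G\cong Q'_{n,n-1}$.

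For (iii), assume $G\not\cong Q_{n,n-1},Q'_{n,n-1}$. If $k=n-1$ then $G\in\mathscr Q(n,n-1)\setminus\{Q_{n,n-1},Q'_{n,n-1}\}$ and Theorem~\ref{max3-2} (applicable since $k=n-1\in\{1,2,3,n-1\}$ and $n\ge 7$) gives $SO(G)\le \phi''(n,n-1)=SO(Q''_{n,n-1})$, with equality iff $G\cong Q''_{n,n-1}$; if $k\le n-2$ only $SO(G)\le \phi(n,k)\le\phi(n,n-2)$ is available. Thus the whole statement hinges on the comparison $\phi(n,n-2)\le \phi''(n,n-1)$ for $n\ge 7$, and this is the step I expect to be the main obstacle: the large terms $(n-3)\sqrt{(n-1)^2+4}$ cancel, but what remains is a delicate balance between $(n-5)\sqrt{(n-3)^2+4}$ on one side and $(n-3)\sqrt{(n-2)^2+4}$ together with $\sqrt{(n-1)^2+1}+\sqrt{(n-1)^2+(n-3)^2}$ on the other, with none of the clean telescoping seen in (ii). My plan for this step follows the pattern of Propositions~\ref{1} and~\ref{2}: rewrite $\phi''(n,n-1)-\phi(n,n-2)$ as a sum of $g^{(1)}$-type differences (so that Lemma~\ref{H} controls the gaps between the ``$+16$'', ``$+9$'' and ``$+4$'' radicals) plus a residual function of $n$, sign the $n$-derivative of the residual via Lemma~\ref{x,y-r}, and close with a small-$n$ base case. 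Assembling the $k=n-1$ case (Theorem~\ref{max3-2}), the $k\le n-2$ case (Lemma~\ref{-1}), and this comparison then yields (iii), with equality precisely for $G\cong Q''_{n,n-1}$.
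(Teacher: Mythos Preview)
Your reduction of parts (i) and (ii) to Theorem~\ref{max}, Theorem~\ref{max2} and Lemma~\ref{-1} is correct, and the cancellation you display for $\phi'(n,n-1)-\phi(n,n-2)$ is right: the difference equals
\[
\bigl(\sqrt{(n-2)^2+9}-\sqrt{(n-2)^2+4}\bigr)+\bigl(\sqrt{(n-1)^2+9}-\sqrt{(n-1)^2+1}\bigr)+\sqrt{13}>0.
\]
The paper itself gives no argument for this corollary beyond the line ``By Lemma~\ref{-1} and Theorems~\ref{max}--\ref{max3-1}'', so for (i) and (ii) your proof is in fact more detailed than what the paper offers.

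For (iii), however, the ``main obstacle'' you flag is not merely delicate---it is actually false. The inequality $\phi(n,n-2)\le\phi''(n,n-1)$ fails already at $n=9$:
\[
\phi(9,7)=\sqrt{65}+6(\sqrt{53}+\sqrt{68})+\sqrt{113}\approx 111.85,
\qquad
\phi''(9,8)=6\sqrt{68}+\sqrt{80}+\sqrt{100}+\sqrt{52}+4\sqrt{40}+2\sqrt{20}\approx 109.87,
\]
and the gap $\phi(n,n-2)-\phi''(n,n-1)$ grows without bound (to first order it behaves like $(n-3)\sqrt{(n-2)^2+4}-(n-5)\sqrt{(n-3)^2+4}\sim 2n$). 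Consequently $Q_{n,n-2}$ is a quasi-tree of order $n$ with $Q_{n,n-2}\not\cong Q_{n,n-1},\,Q'_{n,n-1}$ (it has one fewer edge than either) and $SO(Q_{n,n-2})=\phi(n,n-2)>\phi''(n,n-1)$ for all $n\ge 9$. So your plan to imitate Propositions~\ref{1}--\ref{2} cannot succeed: part~(iii) of the corollary, as stated for arbitrary quasi-trees of order $n\ge 7$, is false. The conclusion of Theorem~\ref{max3-2} only yields the third maximum \emph{within} $\mathscr Q(n,n-1)$; once one ranges over all $k$, the graph $Q_{n,n-2}$ overtakes $Q''_{n,n-1}$.
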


\section*{Conclusions} 

In this paper, we determined the maximum, the second maximum and the third maximum Sombor indices of all quasi-tree graphs in $\mathscr{Q}(n,k)$, respectively, which generalized some known results. Moreover, we characterized their corresponding extremal graphs, respectively. 

\section*{Declaration of Competing Interest}
The authors declare that they have no known competing financial interests or personal relationships that could have appeared to influence the work reported in this paper.

\section*{Acknowledgments} 

The authors express their sincere thanks to the support by National Natural Science Foundation of China(NNSFC) under grant number 11971158.




\end{document}